\newcommand{\pn}{\par\noindent}
\newcommand{\rad}{\mathrm{rad}\,}
\newcommand{\soc}{\mathrm{soc}\,}
\newcommand{\tp}{\mathrm{top}\,}
\newcommand{\DTr}{\mathrm{DTr}\,}
\newcommand{\DHom}{\mathrm{DHom}\,}
\newcommand{\Hom}{\mathrm{Hom}\,}
\newcommand{\ind}{\mathrm{ind}\,}
\newcommand{\md}{\mathrm{mod}\,}
\newcommand{\pd}{\mathrm{pdim}\,}
\newcommand{\id}{\mathrm{idim}\,}
\newcommand{\LUR}{\mathcal{L}_A \cup \mathcal{R}_A}
\newtheorem{Theorem}{Theorem}[section]
\newtheorem{MainTheorem}{Theorem} {}
\newenvironment{dem}{\noindent {\bf Proof.}}{\hfill $\Box$\\}
\newtheorem{lem}[Theorem]{Lemma}
\newtheorem{cor}[Theorem]{Corollary}
\newtheorem{reNN}{Remark} {}
\begin{document}
\title{Toupie algebra, some examples of laura algebras}

\author[D.~Castonguay - J. Dionne - F.~Huard - M.~Lanzilotta ]
{Diane Castonguay, Julie Dionne, Fran\c{c}ois Huard, Marcelo Lanzilotta
\footnote{2000 Mathematics Subject Classification : 16G70, 16G20, 16E10}%
\footnote{Key words and phrases : Laura, weakly shod,
quasi-tilted, tilted, hereditary algebras. }}

\address{\pn Diane Castonguay; Instituto de inform\'atica,
Universidade Federal de Goi\'as, Goi\^ania, Brasil, code postal.}
\email{diane@inf.ufg.br}
\address{\pn Julie Dionne; D\'epartement de Math\'ematiques,
Universit\'e de Sherbrooke, Sherbrooke, Qu\'ebec,
Canada, J1K 2R1.}
\email{dionnej@gmail.com}
\address{\pn Fran\c cois Huard; Department of mathematics,
Bishop's University, Sherbrooke, Qu\'ebec, Canada,  J1M0C8.}
\email{fhuard@ubishops.ca}
\address{\pn Marcelo Lanzilotta; Centro de Matem\'atica (CMAT), Igu\'a 4225,
Universidad de la Rep\'ublica, CP 11400, Montevideo, Uruguay.}
\email{marclan@cmat.edu.uy}

\thanks{
The first and last authors gratefully acknowledges partial support from the Universit\'e de Sherbrooke and University of Bishop's. The first author also gratefully acknowledges partial support from CNPq of Brazil.}

\begin{abstract}
We consider the bound quiver algebras whose ordinary quiver is that
of a canonical algebra. We determine which of those algebras are
hereditary, tilted, quasitilted, weakly shod or laura algebras.
\end{abstract}

\maketitle

\section{Introduction.}

The introduction of quasitilted algebra in the early 90's by Happel,
Ringel and Smal\o \ give rise to the study of two classes
of modules named the right and the left part of the module category.
In \cite{HRS1}, they proved that an algebra is quasitilted if and
only if all indecomposable projective modules lie in the left part
(and equivalently, that all indecomposable injective modules lie in the right part).
Moreover, in this case all indecomposables modules must lie in one of those part.


During the last two decades, the structure of quasitilted algebras and
their module categories has been thoroughly studied and, by a consequence,
the knowledge of the structure of the left and right part of the module category
of an arbitrary algebra also.
Those works leads to various generalisations of classes of algebras.
In this work, we will study examples of the following subclasses of algebras,
each of which is properly contained in the following one:
Hereditary algebras, Tilted algebras, Quasitilted algebras,
Weakly shod algebras, and Laura algebras.

Let $k$ be a commutative field.  A bound quiver algebra $A=kQ/I$ is
called a \textbf{toupie} algebra if $Q$ is a non linear quiver with a
unique source $0$, a unique sink $\infty$ and for any other vertex
$x$ in $Q$, there is exactly one arrow starting at $x$ and exactly
one arrow ending at $x$. Paths going from $0$ to $\infty$ are called
branches of $Q$. Remark that those quiver correspond with
those of canonical algebras introduce in 1984 by Ringel in \cite{R} or are of type $\mathbb{A}_n$.
Our goal is to determine to which of the above classes a given toupie algebra belongs
by looking at its bound quiver. The motivation for this classification of toupie
algebra came from a result of Assem and Coelho \cite{AC} saying that, for the above classes,
for any idempotent $e$ of $A$, the algebra $eAe$ belongs to the same
classes of algebras than $A$, extending \cite{H}, \cite{HRS2} and \cite{KSZ}.
Observe that, for any algebra $A$, there always exist an idempotent $e$ such that $eAe$ is a toupie algebra.

The classification of Toupie algebras into this classes is given as follows:

\begin{MainTheorem}
Let $A=kQ/I$ be a toupie algebra and $m=\dim_k e_0 A e_\infty$ where
$0$ is the unique source of $Q$ and $\infty$ is the unique sink of
$Q$. Let $t$ be the number of branches of $Q$ and assume that $t \geq
2$. Then we have the following:

\begin{itemize}
\item[(H)] The algebra $A$ is hereditary if and only if $m=t$.
\item[(T)] The algebra $A$ is tilted not hereditary if and only if
        \begin{itemize}
        \item[(i)] A is simply connected and $m=1$, or
        \item[(ii)] A is simply connected, $m=t-1$, and $A$ has at most one
        branch of length at least three, or
        \item[(iii)] $m=0$ and there is exactly one relation per branch.
        \end{itemize}

\item[(QT)] The algebra $A$ is quasitilted not tilted if and only if
$A$ is a canonical algebra with $t \geq 3$.


\item[(WS)] The algebra $A$ is weakly shod not quasitilted if and only if $m=0$ and there is at least one branch containing more than one relation.

\item[(L)] The algebra $A$ is Laura not weakly shod if and only if $m=1$ and
exactly one of the branches lies in $I$.

\end{itemize}
\end{MainTheorem}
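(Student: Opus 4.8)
The plan is to use the strict inclusions hereditary $\subsetneq$ tilted $\subsetneq$ quasitilted $\subsetneq$ weakly shod $\subsetneq$ laura and, for a fixed toupie algebra $A=kQ/I$, to decide the smallest class to which it belongs, organizing everything around the value of $m$ and the nature of the relations. The starting point is combinatorial: since every vertex other than $0$ and $\infty$ has a unique entering and a unique leaving arrow, the only parallel paths in $Q$ are the $t$ branch paths $w_1,\dots,w_t$ from $0$ to $\infty$. Hence $I$ is generated by \emph{monomial} relations supported on a single branch and \emph{binomial} relations $\sum_i c_i w_i$, and $m=\dim_k e_0Ae_\infty$ is exactly the number of independent branch paths surviving in $A$, so $0\le m\le t$. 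I record the dictionary ``branch $i$ lies in $I$'' $\iff w_i\in I$ (killed by a monomial relation), and I note that the fundamental group of $(Q,I)$ is computed from the $t-1$ cycles of the underlying graph modulo the homotopy identifications $w_i\sim w_j$ coming from the binomial relations: a single binomial relation involving all branches contracts every cycle, whereas a monomial relation contracts none. This controls the simple-connectedness hypotheses appearing in (T).

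First, (H): as $m\le t$ always and $m=t$ forces the $w_i$ to be independent in $A$, no relation of either kind can occur, so $m=t\iff I=0\iff A$ is hereditary; from now on $m<t$, i.e. $I\neq0$. The engine for the next layer is the criterion of \cite{HRS1}: $A$ is quasitilted iff every indecomposable projective lies in $\mathcal{L}_A$, equivalently $\mathrm{gl.dim}\,A\le2$ and every indecomposable satisfies $\pd\le 1$ or $\id\le 1$. For a toupie algebra all interior projectives and injectives are uniserial along one branch and are placed in $\LUR$ by inspection, so the only projective that can leave $\mathcal{L}_A$ is the source projective $P_0$; the quasitilted dichotomy thus reduces to the homological behaviour of $P_0$ and of the simple modules, which I read directly off the relations. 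The outcome I aim for is that at most one monomial relation per branch keeps $\mathrm{gl.dim}\,A\le2$ and $A$ quasitilted, while a second monomial relation on some branch produces a module with $\pd\ge 2$ and $\id\ge 2$, pushing $A$ out of the quasitilted class.

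I then split on the relations. Among the non-quasitilted algebras, the case $m=0$ (all $w_i\in I$) with some branch carrying two or more monomial relations gives, after describing $\mathcal{L}_A$ and $\mathcal{R}_A$, a cofinite $\LUR$; since no branch path survives there is no binomial coupling, so every path of irreducible morphisms from an injective to a projective has bounded length, whence $A$ is laura and weakly shod but not quasitilted, which is (WS). The case $m=1$ with exactly one branch in $I$ produces an algebra that is \emph{not} simply connected (the monomial-killed branch leaves a cycle uncontracted) and for which $\LUR$ is again cofinite, but now the surviving branch together with the killed parallel branch creates, in $\Gamma_A$, an oriented cycle lying on a path from an injective to a projective; this is precisely the obstruction to $A$ being weakly shod, so $A$ is laura but not weakly shod, which is (L). For the quasitilted algebras with $I\neq0$ I separate tilted from non-tilted by representation type: the canonical algebras of \cite{R} (the $t-2$ relations of canonical shape, forcing $m=2$ and $t\ge3$) are quasitilted but not tilted, which I verify through the separating family of tubes in their module category and the resulting absence of a complete slice, giving (QT); all remaining quasitilted toupie algebras are tilted and are exactly $m=0$ with one relation per branch (iii), $m=1$ and simply connected (i), or $m=t-1$ and simply connected, realized by a single all-branch binomial relation (ii).

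The main obstacle is the tilted layer, that is parts (T)(i)--(iii) and the tilted/canonical split inside the quasitilted class. The laura, weakly-shod and quasitilted memberships all reduce to checkable features of $(Q,I)$ --- cofiniteness of $\LUR$, boundedness of injective-to-projective paths, $\mathrm{gl.dim}$, and the $P_0$-criterion --- so (H), (WS), (L) and the coarse separation of (QT) follow once $\mathcal{L}_A$ and $\mathcal{R}_A$ are described. Proving that exactly the conditions (i)--(iii) characterize tilted-not-hereditary, however, requires exhibiting a complete slice (equivalently a tilting module) in each listed case and ruling one out otherwise; here the clause ``at most one branch of length at least three'' in (ii) is precisely the representation-type threshold separating the tilted algebras from the tubular or wild canonical regime, and establishing this boundary --- together with showing that every non-hereditary quasitilted toupie algebra which is not canonical is genuinely tilted --- is the delicate heart of the argument. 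I expect to treat each case by a simple-connectedness reduction and a one-point (de)construction, matching the shape of $\Gamma_A$ against the known tilted and canonical patterns.
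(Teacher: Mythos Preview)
Your outline has the right backbone --- split on $m$ and on simple connectedness --- but there are two genuine gaps.

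First, you underestimate the ``not Laura'' part of the theorem. Each of (T), (QT), (WS), (L) is an \emph{if and only if}, so for every value of $(m,t,I)$ not on your lists you must prove that $A$ fails to be Laura at all. You only treat the positive cases ($m=0$, $m=1$ with one killed branch, canonical, and the three tilted shapes) and then assert that ``the laura, weakly-shod and quasitilted memberships all reduce to checkable features of $(Q,I)$''. In the paper this is by far the longest part of the argument, and it is not done by describing $\mathcal L_A\cup\mathcal R_A$ globally. Instead one passes to a suitable idempotent subalgebra $eAe$ (using Assem--Coelho \cite{AC}: if $eAe$ is not Laura then $A$ is not) and then exhibits, in each bad configuration, an explicit one-parameter family $\{N_\lambda\}_{\lambda\in k^*}$ of pairwise non-isomorphic indecomposable $eAe$-modules with $\pd N_\lambda\ge 2$ and $\id N_\lambda\ge 2$. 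This has to be done separately for: non-simply-connected with no branch in $I$; $m\ge 2$ with some branch in $I$; $m=1$ with at least two branches in $I$; simply connected with $m=2$ but not canonical; simply connected with $m\ge 3$ and $t>m+1$; and simply connected with $m\ge 3$, $t=m+1$, and two long branches. None of these families is visible from a ``cofiniteness of $\mathcal L_A\cup\mathcal R_A$'' slogan, and in the last two cases one even needs to compute $\DTr$ (respectively $\DTr^3$) of $\rad P_0$ and check its projective dimension by hand before the $N_\lambda$ can be used.

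Second, your reading of clause (ii) is wrong. You say the condition ``at most one branch of length at least three'' is ``the representation-type threshold separating the tilted algebras from the tubular or wild canonical regime''. It is not: for $m=t-1\ge 3$ the algebra is never canonical (canonical forces $m=2$), and what actually happens on the other side of that threshold is that $A$ is \emph{not Laura}, not merely ``quasitilted but not tilted''. The paper proves tiltedness in case (ii) by showing $\rad P_0$ is postprojective over the hereditary algebra $(1-e_0)A(1-e_0)$ (its $\DTr$ is projective), and proves non-Laura in the complementary case by the $\DTr^3$ computation mentioned above. Your plan to locate this boundary via ``representation type'' and ``tubular/canonical patterns'' would not detect the correct dichotomy.

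Two smaller points. Your argument for (WS) (``no branch path survives, so IP-paths are bounded'') skips the key structural lemma: when $m=0$ one first shows there is no sincere indecomposable (Lemma~\ref{lem:zero}), so any indecomposable lives over $(1-e_0)A(1-e_0)$ or $(1-e_\infty)A(1-e_\infty)$, and the bound on IP-paths is obtained by pushing the path into one of these smaller algebras. And your claim that ``the only projective that can leave $\mathcal L_A$ is $P_0$'' is not what drives the proof; the quasitilted/tilted separation in the $m=0$ case goes instead through \cite{HL} on the $\mathbb A_n$ subalgebra $eAe$ along a single branch.
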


The linear case, that is when $t=1$, which is equivalent to $Q=\mathbb{A}_n$,
is excluded from our theorem since this has already been characterized:
such an algebra is always representation finite with no cycles in its
Auslander-Reiten quiver.
Hence, it is always weakly shod and it is tilted if and only
if it is quasitilted \cite{HRS2}.  This occurs precisely when there
is exactly one relation on $Q$ \cite{HL}.

In section 2, we give the different notations and definitions that
we intend to use in this paper.  In order to establish our result,
we will treat the simply connected and non simply connected cases
independently.  The non simply connected case is covered in section
3, where it is divided in the following possibilities:

\begin{description}

    \item[Subsection 3.1] All branches are in the ideal.\\
        In this case, $A$ is weakly shod.
        Moreover, we will see that if $A$ is quasi-tilted
        then it is tilted and that this happens only when
        there is exactly one relation per branch.

    \item[Subsection 3.2] None of the branches are in the ideal (and $I\neq 0$).\\
        Then $A$ is not Laura. Thus $A$ cannot be hereditary, nor tilted,
        nor quasitilted, nor weakly shod.

    \item[Subsection 3.3] Some branches are in the ideal.\\
        If $m \geq 2$, $A$ is not Laura.\\
        If $m=1$, then $A$ is not weakly
        shod.  Moreover, $A$ is Laura if and only if
        exactly one of the branches lies in $I$.
        \end{description}

The simply connected case is covered in section 4. Note that in this
case, $t>m > 0$. We then have the following possibilities:

    \begin{description}
    \item[Subsection 4.1] $0< m\leq 2$.\\
        Then $A$ is tilted if $m=1$, and
         $A$ is quasi-tilted (in fact canonical) if it is Laura and $m=2$.
    \item[Subsection 4.2] $m>2$.\\
        If $t > m+1$, then $A$ is not Laura.\\
        If  $t=m+1$ and there is at most one branch of length at
        least three then $A$ is tilted. Otherwise, $A$ is not Laura.
    \end{description}

\section{Notations and definitions.}

We consider only finite dimensional associative algebras over
fields, and all our modules are right finite dimensional modules.

Let $Q$ be a quiver. Given an arrow $\alpha$ in $Q$, we denote by
$s(\alpha)$ the starting vertex of this arrow and by $e(\alpha)$ its
ending vertex.  A \textbf{path} $p=\alpha_1\alpha_2 \cdots \alpha_n$ in
$Q$ is a sequence of distinct arrows $\alpha_i$, $1\leq i \leq n$
such that $e(\alpha_i)=s(\alpha_{i+1})$, for  $1\leq i < n$.

Let $A=kQ/I$ with $Q$ a quiver and $I$ an admissible ideal of $kQ$.
Given a vertex $x$ in $Q$, we denote by $S_x$ the simple module at
$x$, $P_x$ the indecomposable projective module whose top is $S_x$,
and $I_x$ the indecomposable injective module whose socle is $S_x$.
The projective and injective dimensions of an $A$-module $M$ are
denoted respectively by $\pd_AM$ and $\id_AM$. The socle of $M$ is
denoted by $\soc M$, its radical by $\rad M$. We see $A$-module as
representation on $(Q, I)$, see \cite{ASS}. Given a representation on
$(Q, I)$ say $M$ and a vertex $x$ in $Q$, we denote by $M_x$ the $k$-space of the
representation $M$ at the vertex $x$. Similarly, we denote by
$M_\alpha$ the linear transformation from $M_x$ to $M_y$ where
$\alpha$ is an arrow starting at $x$ and ending at $y$. We denote by
$\tau_A$ and $\tau^{-1}_A$ the Auslander-Reiten translations DTr$_A$
and TrD$_A$ respectively over $\md A$.

A finite non linear quiver $Q$ is called a \textbf{toupie} if it has a
unique source $0$, a unique sink $\infty$, and for any other vertex
$x$ in $Q$ there is exactly one arrow starting at $x$ and exactly
one arrow ending at $x$. The general shape of a toupie is thus

$$\xymatrix@-0.7pc{%
&& \bullet\save[]+<0pt,8pt>*{0}\restore \ar[lld]_{\alpha_1} \ar[ld]^{\alpha_2} \ar[rd]^{\alpha_t} & \\%
\save[]+<-5pt,0pt>*{1}\restore \bullet \ar[d] & \save[]+<-5pt,0pt>*{2}\restore \bullet \ar[d] & \ldots & \save[]+<6pt,0pt>*{t}\restore \bullet \ar[d] \\%
\vdots \ar[d] & \vdots \ar[d] &  & \vdots \ar[d] \\%
\bullet \ar[rrd] & \bullet \ar[rd] & \ldots & \bullet \ar[ld] \\%
&& \bullet \save[]+<0pt,-8pt>*{\infty}\restore &
}$$

The $t$ distinct paths from $0$ to
$\infty$ are called \textbf{branches} of $Q$. We will denote by $alpha_1, \ldots, \alpha_t$
the $t$ arrows starting at the unique source $0$, by $w_i$ the corresponding branch of $Q$
starting with the arrow $\alpha_i$ and by $i$ the end vertex of $\alpha_i$. Thus, we
have $w_i = 0 \xrightarrow{\alpha_i} i \rightsquigarrow \infty$.
Observe that $i$ can be $\infty$.

Let $k$ be a field, $Q$ a toupie quiver and $I$ an admissible ideal
of $kQ$. We say that $A=kQ/I$ is a {\bf toupie algebra}.

Following Ringel \cite{R}, an algebra $A=kQ/I$ is \textbf{canonical} if
it is a toupie algebra with $t\geq 2$ and $I$ is generated by
\{$w_1+\lambda_i w_2 -w_i$ | $3 \leq i \leq t$\} where the
$\lambda_i$ are pairwise distinct non-zero elements of $k$. Observe
that in the case, that $t=2$ this ideal is the zero ideal.
In particular, we always have that $m=2$ for a canonical algebra.

A \textbf{path} between indecomposable $A$-modules $M$ and $N$ is a sequence
$M=M_0\stackrel{f_1}{\rightarrow}M_1\stackrel{f_2}{\rightarrow}M_2\cdots\stackrel{f_n}{\rightarrow}M_{n}=N$
where each $f_i$ is a non-zero non-isomorphism. An \textbf{IP-path} is
a path from an indecomposable injective $A$-module to an
indecomposable projective $A$-module.

We will gives original or equivalent definition for the studied classes of algebras.
For original definition and history on the rising of these classes, see \cite{ACLST}.

Following \cite{HR}, an algebra is \textbf{tilted} if it is the endomorphism
algebra of a tilting module over an hereditary algebra. This notion was
later generalized by Happel, Reiten and Smal\o \cite{HRS1} with quasi-tilted algebra.
An equivalent definition of this later class can be given by the study
of two classes who plays an important role on representation theory and all
other generalization studied in this work.

Given an algebra $A$, we define two subcategories of $\md A$:

$\mathcal L_A=\{M\in $mod$\,A| $ if there exists a path from an
$A$-module $X$ to $M$, then $\pd X \leq 1$\},

$\mathcal R_A=\{M\in \md A| $ if there exists a path from $M$ to
an $A$-module $X$, then $\id X \leq 1$\}.

In particular, an $A$-module of projective dimension greater than
one cannot lie in $\mathcal L_A$, and an $A$-module of injective
dimension greater than one cannot lie in $\mathcal R_A$.

We say that an algebra $A$ is \textbf{quasi-tilted} if all indecomposable
projectives of $A$ lies in $\mathcal L_A$ (this is equivalent to: all indecomposable
injectives of $A$ lies in $\mathcal R_A$).

We say that $A$ is \textbf{weakly shod} if $\ind A - (\mathcal L_A\cup \mathcal R_A)$ and directed, see \cite{CL03},
and that $A$ is \textbf{laura} if $\ind A - (\mathcal L_A\cup \mathcal R_A)$, see \cite{AC0}.




We are interested in determining when a given toupie algebra is either
hereditary, tilted, quasitilted, weakly shod or Laura.  Clearly, a
toupie algebras is hereditary if and only if $m=t$.

Let $Q$ be a toupie quiver and $I$ be an admissible ideal of $kQ$. A
relation $\rho = \sum\limits_{i \in J} \lambda_i w_i \in  I$ is
called {\it minimal\/} if $|J| \geq 2$ and, for every non-empty
proper subset $J' \subset J$, we have $\sum\limits_{i \in J'}
\lambda_i w_i \notin I$. Observe that $J \subset \{1,\, 2,\,
\ldots,\, t\}$ and that $\lambda_i \neq 0$ for all $i \in J$.

Since a toupie algebra has no double by-pass involved in relations,
it follows from \cite{LeMeur} that a toupie algebra $A=kQ/I$
is simply connected if and only if for each pair of distinct branches
$w_i$ and $w_j$ there exists a minimal relation involving both branches.
We will therefore take this last result for our definition of simply
connected algebra.  For a branch $w_i$ in $Q$, we will denote by
$[w_i]$ the set of branches in $Q$ for which there exists a minimal
relation involving $w_i$.  For more details on simply connected
algebras, we refer the reader to \cite{AS}.

\section{The non simply connected case.}

Let $A=kQ/I$ be a toupie non simply connected algebra and $m=\dim_k
e_0 A e_\infty$ where $0$ is the unique source of $A$ and $\infty$
is the unique sink of $Q$. Consider $t$ the number of branches of
$Q$.

\subsection{All branches are in the ideal}

Throughout this subsection, $A=kQ/I$ is a toupie algebra with
$\dim_k e_0 A e_\infty =0$ and $t \geq 2$. Note that this implies that A is not
simply connected. In this case, we will show that $A$ must be weakly
shod. Moreover, we will see that if $A$ is quasi-tilted then it is
tilted and that this happens precisely when there is exactly one
relation by branch. The first lemma shows such an algebra admits no
sincere indecomposable module.

\begin{lem}\label{lem:zero}
Let $A=kQ/I$ be a toupie non simply connected algebra such that
$\dim_k e_0 A e_\infty =0$.  If $M$ is an indecomposable $A$-module
then $M_0 = 0$ or $M_\infty =0$.
\end{lem}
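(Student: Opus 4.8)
The plan is to argue by contradiction: assuming $M$ is indecomposable with $M_0 \neq 0$ and $M_\infty \neq 0$, I will exhibit a nontrivial direct sum decomposition of $M$. The starting observation is that the hypothesis $\dim_k e_0 A e_\infty = 0$ means precisely that every branch $w_i$, being a path from $0$ to $\infty$, lies in $I$; hence the induced linear map $M_{w_i}\colon M_0 \to M_\infty$ vanishes for each $i$. Since in a toupie the only paths from $0$ to $\infty$ are the branches, this will force the total image of $M_0$ at the sink to be zero.

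First I would set $N$ to be the subrepresentation of $M$ generated by $M_0$. Because every vertex $x \neq 0$ lies on a unique branch and is reached from $0$ by a single path (the initial segment of that branch), $N_x$ is simply the image of $M_0$ under the corresponding composite of arrow maps, while $N_0 = M_0$. Evaluating at the sink gives $N_\infty = \sum_i M_{w_i}(M_0) = 0$ by the observation above. Thus $N$ is a nonzero subrepresentation (as $N_0 = M_0 \neq 0$) with $N_\infty = 0$.

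The heart of the argument is to produce a complementary subrepresentation $L$, i.e. subspaces $L_x$ with $M_x = N_x \oplus L_x$ at every vertex and $M_\alpha(L_x) \subseteq L_y$ for every arrow $\alpha\colon x \to y$; necessarily $L_0 = 0$ and $L_\infty = M_\infty$. I would build $L$ one branch at a time, which is legitimate because distinct branches meet only at $0$ and $\infty$, where the values of $L$ are forced and agree. Along a fixed branch write the arrow maps as $V_0 \xrightarrow{f_1} V_1 \to \cdots \xrightarrow{f_n} V_n$ with $V_0 = M_0$, $V_n = M_\infty$, so that $N_j = \mathrm{im}(f_j \cdots f_1)$, $N_{j+1} = f_{j+1}(N_j)$ and $N_n = 0$. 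The construction proceeds \emph{bottom-up}: set $L_n = V_n$, and having chosen a complement $L_{j+1}$ of $N_{j+1}$, choose $L_j \subseteq f_{j+1}^{-1}(L_{j+1})$ to be a complement of $N_j$ inside $V_j$.

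The step to verify, and the point I expect to be the main obstacle, is that such an $L_j$ exists, i.e. that $f_{j+1}^{-1}(L_{j+1}) + N_j = V_j$; a naive greedy (top-down) choice of complements can fail this, which is why the downward recursion is used. This identity holds automatically: given $v \in V_j$, decompose $f_{j+1}(v) = n' + l'$ along $V_{j+1} = N_{j+1} \oplus L_{j+1}$, write $n' = f_{j+1}(n)$ with $n \in N_j$ (possible since $N_{j+1} = f_{j+1}(N_j)$), and then $v - n \in f_{j+1}^{-1}(L_{j+1})$, so $v \in f_{j+1}^{-1}(L_{j+1}) + N_j$. A standard dimension count then yields a complement $L_j$ of $N_j$ contained in $f_{j+1}^{-1}(L_{j+1})$, which guarantees $f_{j+1}(L_j) \subseteq L_{j+1}$. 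Carrying this out along every branch produces the subrepresentation $L$, and since both $N$ and $L$ are subrepresentations of $M$ they automatically satisfy the relations in $I$. Finally $M = N \oplus L$ with $N \neq 0$ and $L \neq 0$ (because $L_\infty = M_\infty \neq 0$), contradicting the indecomposability of $M$; hence $M_0 = 0$ or $M_\infty = 0$.
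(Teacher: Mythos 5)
Your proof is correct, and at its core it runs on the same mechanism as the paper's: split the space at each vertex of a branch into the part coming from $M_0$ and a complement propagated backwards through the arrow maps, using the fact that $m=0$ forces every branch $w_i$ to lie in $I$, hence $M_{w_i}=0$. But your execution is genuinely different in two ways, both to your advantage. First, the paper localizes: it begins by asserting (without proof) that some branch $w_1$ has $M_\alpha\neq 0$ for every arrow $\alpha$ on it, cuts that branch at the first vertex where a zero-relation takes effect, sets $X_k$ equal to the image of the composite up to that vertex, pulls back ``orthogonal'' complements $Y_r$ step by step, asserts block-diagonality of each $M_{\alpha_r}$, and leaves implicit how a splitting of one segment extends to a direct-sum decomposition of all of $M$ (the segment is glued to the rest of the quiver at $0$ and $\infty$, so this does require an argument). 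You instead globalize: $N$ is the subrepresentation generated by $M_0$, so $N_\infty=0$ precisely because all branches lie in $I$, and the complement $L$ is built on every branch simultaneously with the forced values $L_0=0$, $L_\infty=M_\infty$ agreeing at the shared vertices; this yields $M=N\oplus L$ with no glueing left to check and without needing the paper's unproven preliminary claim. Second, you isolate and actually prove the key identity $f_{j+1}^{-1}(L_{j+1})+N_j=V_j$, which follows from $N_{j+1}=f_{j+1}(N_j)$, i.e.\ from $N$ being generated at the source; this is exactly the step that fails for an arbitrarily chosen complement and that the paper's orthogonal-complement notation glosses over (indeed, block-diagonality of $M_{\alpha_r}$ holds only for complements chosen compatibly, which is what your downward recursion arranges). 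The one small blemish is the parenthetical claim that every vertex $x\neq 0$ is reached from $0$ by a single path --- false for $x=\infty$ --- but you treat the sink separately via $N_\infty=\sum_i M_{w_i}(M_0)=0$, so nothing is lost.
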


\begin{dem}
Assume on the contrary that $M_0 \neq 0$ and $M_\infty \neq 0$. This
implies that there must be one branch, say $w_1$, such that
$M_\alpha \neq 0$ for each arrow $\alpha$ on the branch $w_1$. Let
$w_1 = \alpha_1 \alpha_2 \ldots \alpha_s$. Since each branch is in
the ideal $I$, there exists $k \in \{1, \ldots, s-1\}$ such that
$\alpha_1 \alpha_2 \ldots \alpha_k \notin I$ and $\alpha_1 \alpha_2
\ldots \alpha_{k+1} \in I$.

Let $X_k$ be the image of the map $M_{\alpha_k} \circ \ldots \circ
M_{\alpha_1}$. Since $M_{\alpha_{k+1}} \neq 0$, $X_k$ must be
strictly contained in $M_k$ where $k = e(\alpha_k)$. We thus have
$M_k \cong X_k \oplus Y_k$ where $Y_k = X_k^{\perp _{M_k}} \neq 0$.

Let $Y_{k-1} = M_{\alpha_k}^{-1} (Y_k)$ and $X_{k-1} =
Y_{k-1}^{\perp _{M_{k-1}}}$. Then we have that $M_{\alpha_k} =
\begin{pmatrix}
  g_k & 0 \\
  0 & h_k
\end{pmatrix}$
, where $g_k: X_{k-1} \to X_k$ and $h_k: Y_{k-1} \to Y_k$.

We repeat the construction for all $r \in \{k-1, \ldots, 1\}$, that
is, $Y_{r-1} = M_{\alpha_r}^{-1} (Y_r) \neq 0$, $X_{r-1} =
Y_{r-1}^{\perp _{M_{r-1}}}$ and $M_{\alpha_r} =
\begin{pmatrix}
  g_r & 0 \\
  0 & h_r
\end{pmatrix}$
, where $g_r: X_{r-1} \to X_r$ and $h_r: Y_{r-1} \to Y_r$.

We obtain the following:

$$X_0 \oplus Y_0 \xrightarrow{\begin{pmatrix}
  g_1 & 0 \\
  0 & h_1
\end{pmatrix}} X_1 \oplus Y_1 \rightarrow
\ldots \rightarrow X_k \oplus Y_k \xrightarrow{\begin{pmatrix}
  0 & h_{k+1}
\end{pmatrix}} M_{k+1}$$

By definition, $Y_0 = M_{\alpha_1}^{-1} \ldots
M_{\alpha_k}^{-1}(Y_k)$ and $Y_k = X_k^{\perp _{M_k}}$ where $X_k$
is the image of the map $M_{\alpha_k} \circ \ldots \circ
M_{\alpha_1}$. This implies that $Y_0 = 0$. Since $X_k \neq 0$ and
$Y_k \neq 0$, we get a decomposition of $M$. This yields the desired
contradiction.
\end{dem}

As a consequence of the technique used in the above proof, we have
the following result.

\begin{lem}
Let $A=kQ/I$ be a toupie not simply connected algebra with $\dim_k
e_0 A e_\infty =0$. Consider the algebras $B=(1-e_0)A(1-e_0)$ and
$C=(1-e_\infty)A(1-e_\infty)$.
\begin{itemize}
\item[(i)] Let $M$ be an indecomposable $B$-module with $M_\infty \neq
0$, then the socle of the module $M$ is a direct sum of copies of
the simple module $S_\infty$.
\item[(ii)] Let $N$ be an indecomposable $C$-module with $N_0 \neq
0$ then the top of the module $N$ is a direct sum of copies of the
simple module $S_0$.
\end{itemize}
\end{lem}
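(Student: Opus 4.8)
The plan is to directly leverage the decomposition technique from the proof of Lemma~\ref{lem:zero}. For part (i), let $M$ be an indecomposable $B$-module with $M_\infty \neq 0$, where $B = (1-e_0)A(1-e_0)$ is the algebra obtained by deleting the source $0$. The key observation is that $B$ is itself built from the quiver $Q$ with the vertex $0$ (and its arrows $\alpha_1, \ldots, \alpha_t$) removed, so its branches are paths of the form $i \rightsquigarrow \infty$. Since every full branch of $Q$ lies in $I$, the truncated branches in $B$ still contain relations; more precisely, for any branch $w_i = \alpha_1\alpha_2 \cdots \alpha_s$ with $\alpha_1\alpha_2\cdots\alpha_{k+1} \in I$ but $\alpha_1\cdots\alpha_k \notin I$, the composition of maps along $B$ cannot reach $M_\infty$ without vanishing.

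First I would argue by contradiction: suppose $\soc M$ is \emph{not} a direct sum of copies of $S_\infty$, so there is a simple summand $S_x$ of $\soc M$ at some vertex $x \neq \infty$. Since $x$ is not the sink, there is a unique arrow $\alpha$ starting at $x$, and the socle condition forces $M_\alpha = 0$ on the corresponding subspace. Because $M_\infty \neq 0$ and $M$ lives on the connected quiver of $B$, there must be some branch segment reaching $\infty$ along which all maps are nonzero. I would then replicate the construction in Lemma~\ref{lem:zero}: working backwards from the point where the relation forces a map to vanish, I build subspaces $Y_r = M_{\alpha_r}^{-1}(Y_{r+1})$ and their orthogonal complements $X_r = Y_r^{\perp_{M_r}}$, obtaining a block-diagonal decomposition of each structure map $M_{\alpha_r} = \left(\begin{smallmatrix} g_r & 0 \\ 0 & h_r \end{smallmatrix}\right)$. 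This yields a nontrivial direct-sum decomposition of $M$, contradicting indecomposability.

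Part (ii) is entirely dual: for $C = (1-e_\infty)A(1-e_\infty)$, an indecomposable $C$-module $N$ with $N_0 \neq 0$ must have its top concentrated at $S_0$. I would either run the same splitting argument using the \emph{images} of the structure maps (pushing forward from $0$ rather than pulling back toward $\infty$), or invoke the duality $D = \Hom_k(-,k)$, which sends $C = (1-e_\infty)A(1-e_\infty)$-modules to modules over the opposite algebra whose quiver reverses all arrows, turning the sink into a source and $\soc$ into $\tp$, so that part (i) applies verbatim to $D N$.

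The main obstacle I anticipate is making the splitting argument precise in the \emph{deleted-vertex} setting rather than over $A$ itself. In Lemma~\ref{lem:zero} the hypothesis $M_0 \neq 0$ and $M_\infty \neq 0$ forced a single branch with all maps nonzero; here, with $0$ removed, I must instead isolate the mechanism by which a non-$S_\infty$ socle summand survives and show it splits off. The delicate point is to verify that the orthogonal complements $X_r, Y_r$ are compatible with \emph{all} the structure maps of $M$ at the branch points (near $\infty$, several branches merge), not just along one branch, so that the resulting decomposition is a genuine decomposition of $B$-modules. Once the block-diagonal form is established at every relevant arrow, indecomposability delivers the contradiction.
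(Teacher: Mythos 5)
Your proposal is correct and is essentially the paper's own proof, which simply reapplies the splitting technique of Lemma~\ref{lem:zero}: a socle summand $S_x$ with $x \neq \infty$ makes the unique outgoing map $M_\alpha$ non-injective, and pulling the kernel back along the arm with compatibly chosen complements decomposes $M$ (nontrivially, since the complementary summand contains $M_\infty \neq 0$), with the dual non-surjectivity argument for $C$. One harmless misstatement: since the generators of $I$ may be exactly the full branch paths, which start at the deleted source $0$, the algebra $B$ can in fact be hereditary, so your remark that the truncated branches in $B$ ``still contain relations'' is false in general --- but your splitting argument never actually uses it.
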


\begin{proof}
Applying the method of the previous proof, one can easily
decompose $M$ if one of the linear maps $M_\alpha$ is not
injective or $N$ if one of the linear maps $N_\alpha$ is not
surjective.
\end{proof}

The following is now an easy consequence.

\begin{cor}\label{cor:topsoc}
Let $A=kQ/I$ be a toupie not simply connected algebra such that
$\dim_k e_0 A e_\infty =0$. Let $M$ be an indecomposable $A$-module.
If $M_0 \neq 0$ then the top of the module $M$ is a direct sum of
copies of the simple module $S_0$. If $M_\infty \neq 0$ then the
socle of the module $M$ is a direct sum of copies of the simple
module $S_\infty$.
\end{cor}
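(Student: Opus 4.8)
The plan is to combine Lemma \ref{lem:zero} with the preceding lemma, the only real work being to transfer statements about modules over the truncated algebras $B$ and $C$ back to modules over $A$.

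First I would treat the case $M_0 \neq 0$. By Lemma \ref{lem:zero} we automatically have $M_\infty = 0$, so $M$ is supported on the vertices of $Q$ different from $\infty$. Since $\infty$ is the unique sink of the toupie $Q$, every arrow incident to $\infty$ ends there, and as $M_\infty = 0$ all such arrows act on $M$ as the zero map. Consequently the representation $M$ carries exactly the same data as a representation of the full subquiver of $Q$ on the vertices $\neq \infty$, that is, as a module over $C = (1-e_\infty)A(1-e_\infty)$. I would record that this identification is an equivalence between the category of $A$-modules annihilated by $e_\infty$ and $\md C$, so in particular $M$ stays indecomposable as a $C$-module and still has $M_0 \neq 0$. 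Applying part (ii) of the preceding lemma then gives that $\tp M$ is a direct sum of copies of $S_0$ as a $C$-module.

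The one point that needs a short check is that the top of $M$ computed over $C$ coincides with the top computed over $A$. I would argue this by observing that the radical of $M$ is the same submodule whether formed over $A$ or over $C$: the only arrows of $Q$ absent from the subquiver on the vertices $\neq \infty$ are those ending at $\infty$, and since $M_\infty = 0$ these contribute nothing to the radical. Hence $\tp M = M/\rad M$ is the same semisimple module in either reading, and the conclusion about $S_0$ transfers verbatim.

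The case $M_\infty \neq 0$ is entirely symmetric: Lemma \ref{lem:zero} forces $M_0 = 0$, so $M$ becomes an indecomposable module over $B = (1-e_0)A(1-e_0)$ with $M_\infty \neq 0$, and part (i) of the preceding lemma yields that $\soc M$ is a direct sum of copies of $S_\infty$; here one uses dually that $0$ is the unique source, so deleting it alters neither the socle of $M$ nor its indecomposability. I do not expect any genuine obstacle, since the corollary is essentially a bookkeeping combination of the two prior results; the only place demanding care is the verification that passing between $A$ and the truncated algebras $B,C$ preserves top, socle, and indecomposability, which is exactly where the sink/source structure of the toupie quiver is used.
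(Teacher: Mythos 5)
Your proof is correct and takes essentially the same route as the paper, which presents the corollary as an immediate consequence of Lemma \ref{lem:zero} combined with the preceding lemma, by viewing an indecomposable $A$-module vanishing at $\infty$ (respectively at $0$) as an indecomposable module over $C=(1-e_\infty)A(1-e_\infty)$ (respectively $B=(1-e_0)A(1-e_0)$). Your explicit check that this identification preserves indecomposability, top and socle --- using that $0$ is the unique source and $\infty$ the unique sink --- is exactly the bookkeeping the paper leaves implicit.
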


We now show that $A$ is an extension and a co-extension of weakly
shod algebras.

\begin{lem}
Let $A=kQ/I$ be a toupie non simply connected algebra with $\dim_k
e_0 A e_\infty =0$. Consider the algebras $B=(1-e_0)A(1-e_0)$ and
$C=(1-e_\infty)A(1-e_\infty)$. Then $B$ and $C$ are weakly shod.
\end{lem}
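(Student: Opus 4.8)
The plan is to dispose of $C$ by duality and then concentrate on $B$. Passing to the opposite algebra exchanges the two parts, $\mathcal{L}_{A^{op}}=(\mathcal{R}_A)^{op}$ and $\mathcal{R}_{A^{op}}=(\mathcal{L}_A)^{op}$, so the complement $\ind A-(\mathcal{L}_A\cup\mathcal{R}_A)$ is self-dual and the properties ``finite'' and ``directed'' are preserved; hence an algebra is weakly shod if and only if its opposite is. Now $A^{op}$ is again a toupie algebra satisfying $\dim_k e_\infty A^{op} e_0=0$, and since $(e\,A\,e)^{op}=e\,A^{op}e$, the algebra $C$ is precisely the opposite of the ``delete-the-source'' algebra built from $A^{op}$. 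It therefore suffices to prove that $B=(1-e_0)A(1-e_0)$ is weakly shod.

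First I would describe $B$ explicitly. Deleting the source $0$ removes the arrows $\alpha_1,\dots,\alpha_t$ and leaves the $t$ branches shorn of their initial arrows; these are linear paths sharing only the sink $\infty$, so $Q_B$ is a tree with unique sink $\infty$. The only non-monomial relations of $A$ are the $k$-combinations of the full branches $w_i$, each of which passes through $0$; consequently the induced ideal $I_B$ is generated by the monomial zero-relations of $I$ lying entirely on the arms. In particular, if $I_B=0$ then $B=kQ_B$ is hereditary, hence tilted and so weakly shod, and that sub-case is settled. Moreover $B'=(1-e_\infty)B(1-e_\infty)$ is a disjoint union of linear quivers with monomial relations, i.e.\ a product of Nakayama algebras, so $B'$ is representation-finite and representation-directed.

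For the general case I would control the two halves of $\md B$ separately. By the socle lemma just proved, every indecomposable $M\in\md B$ with $M_\infty\neq0$ has $\soc M$ a direct sum of copies of $S_\infty$; such an $M$ is a ``funnel'' all of whose composition factors flow toward the sink, and one expects that, apart from finitely many short modules concentrated near $\infty$, these modules lie in $\mathcal{R}_B$. Every remaining indecomposable satisfies $M_\infty=0$ and is thus a $B'$-module, of which there are only finitely many and which form a directed family by the previous paragraph. This would place $\ind B-(\mathcal{L}_B\cup\mathcal{R}_B)$ inside a finite, directed set of modules, exactly the requirement for $B$ to be weakly shod; alternatively one may read off the finiteness from the characterization of weakly shod algebras in \cite{CL03} via a bound on the lengths of paths from indecomposable injectives to indecomposable projectives, the finite category $\ind B'$ supplying that bound.

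The hard part will be this third step: converting the socle description into a precise list of the indecomposables through $\infty$ that escape $\mathcal{R}_B$ --- already the simple $S_\infty=P_\infty$ acquires injective dimension greater than one in the presence of arm relations, so it need not belong to $\mathcal{R}_B$ --- and certifying that the entire complement supports no oriented cycle when $B$ is representation-infinite, that is, when the underlying tree $Q_B$ fails to be Dynkin. I expect this to reduce, through the funnel structure at $\infty$ together with the directedness of the Nakayama algebra $B'$, to a bounded-length argument on IP-paths; but it is here that the genuine computation resides.
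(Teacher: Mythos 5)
Your preparatory steps are all correct and coincide with the paper's: the reduction of $C$ to $B$ via opposite algebras is exactly the paper's ``the other case being dual''; the description of $B$ as a tree with unique sink $\infty$ and only monomial relations on the arms is right; and the observation that $B'=(1-e_\infty)B(1-e_\infty)$ is a product of representation-finite, representation-directed linear-quiver algebras is precisely the auxiliary fact the paper uses. But there is a genuine gap, and you name it yourself: the ``third step'' is never carried out. Worse, the route you sketch for it --- compiling a precise list of the indecomposables $M$ with $M_\infty\neq 0$ that escape $\mathcal{R}_B$, and then certifying that the whole complement $\ind B-(\mathcal{L}_B\cup\mathcal{R}_B)$ is finite and directed --- is a classification problem that is substantially harder than the statement, and nothing in the proposal makes progress on it. As written, only the sub-case $I_B=0$ and the modules vanishing at $\infty$ are actually handled.

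The missing idea, which is how the paper closes the argument and which requires no membership test for $\mathcal{R}_B$ at all, is to work directly with $IP$-paths. Since $\infty$ is the unique sink of $Q_B$, for every vertex $x$ one can walk a path $x\rightsquigarrow\infty$ and use $\Hom(I_z,I_y)\cong e_yBe_z\neq 0$ for each arrow $y\to z$ to produce a chain of nonzero maps $I_\infty\rightsquigarrow I_x$ between injectives; hence any $IP$-path $I\rightsquigarrow P$ extends to one starting at $I_\infty$. The first map out of $I_\infty$ cannot be injective (a mono out of an injective splits, and the target is indecomposable, so it would be an isomorphism), so it kills $\soc I_\infty=S_\infty$ and factors through an indecomposable summand $N$ of $I_\infty/\soc I_\infty$; since $(I_\infty)_\infty\cong D(e_\infty Be_\infty)$ is one-dimensional, $N_\infty=0$. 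Now your socle lemma does the real work \emph{inductively along the path}: if a successor $X$ on the path had $X_\infty\neq 0$, its socle would be a sum of copies of $S_\infty$, so the nonzero image in $X$ of the preceding module $N'$ would contain $S_\infty$ in its socle, forcing $N'_\infty\neq 0$, a contradiction. Thus the entire tail $N\rightsquigarrow P$ consists of modules annihilated by $e_\infty$, i.e.\ it is a path in $\md B'$, and the representation-directedness and finiteness of $\ind B'$ give a uniform bound on its length; bounded $IP$-path length is exactly the criterion of \cite{CL03} for $B$ to be weakly shod. Note that this sidesteps entirely your (legitimate) worry about $P_\infty=S_\infty$ failing to lie in $\mathcal{R}_B$: one never needs to decide, module by module, who belongs to $\mathcal{R}_B$.
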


\begin{dem}
We show that $B$ is weakly shod, the other case being dual. Let $I
\rightsquigarrow P$ be an $IP$-path in mod $B$. Note that since
$\infty$ is a sink, there always exists a path from $I_\infty$ to
any indecomposable injective of mod$\,B$. Therefore we can extend
and refine the given path to:

$$I_\infty \to N\rightsquigarrow P$$

\noindent where $N$ is some indecomposable summand of
$I_\infty/ \soc(I_\infty)$. Thus $N_\infty = 0$.

We claim that every successor $X$ of $N$ satisfies $X_\infty = 0$.
Assume there is a map from $N$ to $X$ and $X_\infty \neq 0$. Viewing
$X$ as an indecomposable $A$-module, it follows from corollary
\ref{cor:topsoc} that the socle of $X$ is a direct sum of copies of
the simple module $S_\infty$. Therefore, $N_\infty \neq 0$ which is
a contradiction. Hence, $X_\infty = 0$ and the claim follows by
induction.

Thus each path $N \rightsquigarrow P$ can be viewed as a path over
the algebra $(1-e_\infty)B(1-e_\infty)$. Since each connected
component of this algebra is of finite representation type and
admits no cycle in its Auslander-Reiten quiver, there is a bound on
the length of paths from $N$ to $P$. Thus there is a bound on the
length of $IP$-paths and $B$ is a weakly shod algebra.
\end{dem}

\begin{lem}
Let $A=kQ/I$ be a toupie non simply connected algebra such that
$\dim_k e_0 A e_\infty =0$. Then $A$ is weakly shod.
\end{lem}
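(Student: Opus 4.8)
The plan is to use exactly the criterion exploited in the proof of the preceding lemma: $A$ is weakly shod as soon as there is a uniform bound on the length of the IP-paths in $\md A$. So I would fix an IP-path $I_a = X_0 \to X_1 \to \cdots \to X_n = P_b$ from an indecomposable injective to an indecomposable projective and bound $n$. The two facts I would extract first from Corollary \ref{cor:topsoc} are monotonicity statements along the path. If $g\colon Z\to Y$ is nonzero and $Y_\infty\neq 0$, then $\soc Y$ is a sum of copies of $S_\infty$, so the nonzero submodule $\mathrm{im}\,g$ meets the socle and has $(\mathrm{im}\,g)_\infty\neq 0$, whence $Z_\infty\neq 0$; dually, if $U_0\neq 0$ then $V_0\neq 0$ for every nonzero $g\colon U\to V$. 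Thus along the path the condition ``$(\cdot)_\infty\neq 0$'' holds on an initial segment of indices and ``$(\cdot)_0\neq 0$'' on a final segment, and by Lemma \ref{lem:zero} these two segments are disjoint.

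The heart of the argument, and the step I expect to be the main obstacle, is to show that these two segments collapse to at most the single endpoints $X_0$ and $X_n$. For the left end, among the indecomposable injectives only $I_\infty$ has a nonzero component at $\infty$ (as $\infty$ is a sink), so the initial segment is empty unless $a=\infty$. Assume $a=\infty$, so $X_0=I_\infty$, which has simple socle $S_\infty$. The first map $f_1\colon I_\infty\to X_1$ is a nonzero nonisomorphism, hence not a monomorphism (an indecomposable injective that embeds into an indecomposable module must do so as an isomorphism), so its kernel contains $\soc I_\infty=S_\infty$ and therefore $(\mathrm{im}\,f_1)_\infty=0$. If $X_1$ had $(X_1)_\infty\neq 0$, its socle would be a sum of copies of $S_\infty$ by Corollary \ref{cor:topsoc}, forcing the nonzero submodule $\mathrm{im}\,f_1$ to have nonzero component at $\infty$, a contradiction; hence $(X_1)_\infty=0$, and by the monotonicity above $(X_k)_\infty=0$ for all $k\geq 1$. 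The dual argument at the right end uses that $P_0$ has simple top $S_0$ and is the only indecomposable projective with nonzero component at $0$: the last map $X_{n-1}\to P_0$ is not an epimorphism, so its image lies in $\rad P_0$ and thus has zero component at $0$, which together with Corollary \ref{cor:topsoc} forces $(X_{n-1})_0=0$ and hence $(X_k)_0=0$ for all $k\leq n-1$.

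Consequently every intermediate module $X_1,\dots,X_{n-1}$ satisfies $(X_k)_0=(X_k)_\infty=0$, i.e. is supported on the interior subquiver and is a module over $D=(1-e_0)(1-e_\infty)A(1-e_0)(1-e_\infty)$. Since the quiver of $D$ is a disjoint union of linear quivers of type $\mathbb{A}$, each connected component of $D$ is representation finite with no cycle in its Auslander--Reiten quiver; hence no indecomposable is repeated along $X_1\to\cdots\to X_{n-1}$, and this sub-path has length bounded by the finite number of indecomposable $D$-modules. Adding back the two end arrows then bounds $n$ independently of the chosen IP-path (equivalently, the two end portions are absorbed by the fact, already established, that $B$ and $C$ are weakly shod). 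Therefore the lengths of the IP-paths in $\md A$ are uniformly bounded, and $A$ is weakly shod.
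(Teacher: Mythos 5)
Your proof is correct, and although it rests on the same two pillars as the paper's argument --- Lemma \ref{lem:zero} and Corollary \ref{cor:topsoc} on one side, and the representation-directedness of bound linear quivers on the other --- the route is genuinely different and more self-contained. The paper first proves that $B=(1-e_0)A(1-e_0)$ and $C=(1-e_\infty)A(1-e_\infty)$ are weakly shod, then extends and refines an arbitrary $IP$-path to one of the form $I_\infty\to N\rightsquigarrow P_0$, with $N$ an indecomposable summand of $I_\infty/\soc(I_\infty)$, shows the tail lies in $\md C$, and extracts the bound from the weak shodness of $C$. Note that this extension step tacitly uses the existence of paths $I_\infty\rightsquigarrow I$ and $P\rightsquigarrow P_0$ in $\ind A$; the second is not automatic, since all paths in $Q$ from $0$ to the vertex of $P$ may lie in $I$, so that $\Hom(P,P_0)$ can vanish. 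You bypass both the auxiliary lemma and the extension: your monotonicity observations (the condition at $\infty$ propagates backwards along a path, the condition at $0$ forwards, and by Lemma \ref{lem:zero} the two segments are disjoint), combined with your endpoint analysis --- the first map out of $I_\infty$ cannot be mono, so it kills the simple socle and hence the $\infty$-component of its image, while the last map into $P_0$ cannot be epi, so its image lies in $\rad P_0$ and has zero component at $0$ --- trap the entire interior $X_1,\dots,X_{n-1}$ of any $IP$-path over the two-sided truncation $(1-e_0-e_\infty)A(1-e_0-e_\infty)$ in a single step, and the uniform bound then follows from representation-finiteness and directedness of bound $\mathbb{A}_n$ quivers exactly as in the paper's treatment of $(1-e_\infty)B(1-e_\infty)$. (A small consistency check worth keeping in mind: your left- and right-end arguments together show that certain $IP$-paths, e.g.\ ones starting at an injective $I_a$ with $(I_a)_0\neq 0$, simply do not exist; this is not a flaw but a vacuous case, as one verifies on small examples.) What the paper's route buys is reuse of a lemma it wants anyway; what yours buys is symmetry in $0$ and $\infty$, independence from that lemma, and a rigorous endpoint treatment in place of the asserted extension.
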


\begin{dem}
Let $I \rightsquigarrow P$ be an $IP$-path. We can extend and refine
the path to:

$$I_\infty \to N \rightsquigarrow P_0$$

\noindent where $N$ is some indecomposable summand of $I_\infty/
$soc$\,(I_\infty)$. As in the previous proof, we can show, using
lemma \ref{cor:topsoc}, that the path from $N$ to $P_0$ lies in
mod$\,C$ where $C = (1-e_\infty)A(1-e_\infty)$.

By the previous lemma $C$ is weakly shod so there is a bound on the
length of paths from $N$ to $P_0$ in mod$\,C$. Since each path
$N\rightsquigarrow P_0$ in mod$\,A$ can be viewed as a path in
mod$\,C$ and $I_\infty/$ soc$\,(I_\infty)$ has a finite number of
indecomposable summands we obtain a bound on the length of
$IP-paths$ in mod$\,A$ and $A$ is weakly shod.
\end{dem}

\begin{Theorem}
Let $A=kQ/I$ be a toupie non simply connected algebra such that
$\dim_k e_0 A e_\infty =0$. The following are equivalent:

\begin{itemize}
\item[(a)] The algebra $A$ is tilted
\item[(b)] The algebra $A$ is quasi-tilted
\item[(c)] There is exactly one relation by branch.
\end{itemize}
\end{Theorem}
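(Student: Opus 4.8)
The plan is to prove the equivalences $(a)\Rightarrow(b)$, $(b)\Rightarrow(c)$, and $(c)\Rightarrow(a)$ in a cycle. The implication $(a)\Rightarrow(b)$ is free, since every tilted algebra is quasi-tilted by the theorem of Happel, Reiten and Smal\o. The remaining two implications are where the work lies, and I would organize both around the structural results already established in this subsection: by Lemma \ref{lem:zero} and Corollary \ref{cor:topsoc}, no indecomposable $A$-module is sincere, and any indecomposable with $M_0\neq 0$ (resp. $M_\infty\neq 0$) has top (resp. socle) concentrated at $S_0$ (resp. $S_\infty$).

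For $(b)\Rightarrow(c)$, I would argue by contraposition: suppose some branch $w_i$ carries two or more relations, or carries none at all. Since $\dim_k e_0Ae_\infty=0$, every branch lies in $I$, so each branch has \emph{at least} one relation; the failure of $(c)$ therefore means some branch $w_i$ has at least two relations, i.e. there exist arrows $\alpha\cdots\alpha_k\notin I$ but $\alpha\cdots\alpha_{k+1}\in I$ with a \emph{second} such minimal relation strictly further along the same branch. The strategy is to exhibit an indecomposable projective $P_x$ (for a suitable vertex $x$ on this branch) whose projective resolution forces $\pd_A P_x$ to be inadmissible for $\mathcal L_A$, i.e. to produce a path ending at $P_x$ through a module of projective dimension $\geq 2$, so that $P_x\notin\mathcal L_A$ and $A$ fails to be quasi-tilted. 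Concretely, two relations on a branch produce a projective whose first syzygy is itself non-projective with a non-projective syzygy, yielding projective dimension at least two somewhere along the branch; this violates the defining condition that all indecomposable projectives lie in $\mathcal L_A$.

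For $(c)\Rightarrow(a)$, I would start from the hypothesis that every branch carries exactly one relation. The aim is to identify $A$ as the endomorphism algebra of a tilting module over a hereditary algebra, or equivalently to show $A$ is quasi-tilted with the additional directedness that promotes quasi-tilted to tilted. A clean route is to compute $\mathcal L_A$ and $\mathcal R_A$ explicitly: using Corollary \ref{cor:topsoc}, the ``one relation per branch'' condition keeps all projective dimensions at most one and all injective dimensions at most one, so every indecomposable lands in $\mathcal L_A\cup\mathcal R_A$, giving quasi-tiltedness; one then checks that $A$ contains no complete slice obstruction beyond the tilted range, or invokes the fact that a quasi-tilted toupie algebra of this shape has a connecting component. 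I would expect $(b)\Rightarrow(c)$ to be the main obstacle, since it requires pinning down precisely how a second relation on a single branch manufactures an indecomposable projective of projective dimension at least two lying outside $\mathcal L_A$; the bookkeeping of syzygies along a branch with two relations, and verifying that the resulting module genuinely sits on a path into the relevant projective, is the delicate point.
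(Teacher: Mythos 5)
Your cycle of implications is the same as the paper's ((a)$\Rightarrow$(b) free, (b)$\Rightarrow$(c) by contraposition, (c)$\Rightarrow$(a) directly), but both nontrivial arrows are left with genuine gaps, and the one concrete mechanism you do state is wrong. For (b)$\Rightarrow$(c), the phrase ``two relations on a branch produce a projective whose first syzygy is itself non-projective with a non-projective syzygy, yielding projective dimension at least two'' is incoherent: a projective has projective dimension zero. What failing quasi-tiltedness actually requires is an indecomposable $X$ with $\pd_A X\geq 2$ \emph{together with} a path $X\rightsquigarrow P$ into an indecomposable projective, and you explicitly defer exactly this construction as ``the delicate point'' --- so the implication is announced rather than proved. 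Moreover, carrying it out by hand is not routine: which interval module on the branch has $\pd\geq 2$ and admits a path to a projective depends on whether the two relations overlap, are adjacent, or are disjoint, so a case analysis is unavoidable. The paper bypasses all of this bookkeeping with a reduction you never invoke: taking $e$ the sum of the idempotents of the vertices of the offending branch (and using $e_0Ae_\infty=0$ so that no paths through other branches survive), $eAe$ is a linear $\mathbb{A}_n$ algebra with at least two relations, hence not quasi-tilted by \cite{HL}, and quasi-tiltedness is inherited by $eAe$ from $A$ by \cite{AC}. That reduction is the entire content of the paper's two-line argument, and without it (or a completed direct construction) your (b)$\Rightarrow$(c) has a hole.

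For (c)$\Rightarrow$(a), your plan contains a false claim and then assumes the crux. The assertion that one relation per branch ``keeps all projective dimensions at most one and all injective dimensions at most one'' is simply wrong: every relation forces global dimension two (for instance, the simple module at the source of any relation has projective dimension exactly two, since its second syzygy is the projective at the endpoint of the relation). Even the corrected statement --- every indecomposable has $\pd\leq 1$ or $\id\leq 1$, with the appropriate path conditions --- would only yield (b), not (a): quasi-tilted does not imply tilted (canonical algebras are the standing counterexamples), so the passage to tiltedness is precisely where a complete slice must be exhibited. Your outline replaces this by the unproved assertion that ``a quasi-tilted toupie algebra of this shape has a connecting component,'' which is essentially the conclusion restated. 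The paper supplies the missing construction explicitly: write $A=B[M]$ with $B=(1-e_0)A(1-e_0)$ and $M=\rad P_0$; under the one-relation-per-branch hypothesis, $B$ is tilted and the summands of $M$ are injective $B$-modules lying on a common slice of the preinjective component of the Auslander--Reiten quiver of $B$, so $A$ acquires a complete slice and is tilted by the Happel--Ringel criterion \cite{HR}. Until your argument contains this one-point-extension step (or some other explicit slice), (c)$\Rightarrow$(a) is not established.
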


\begin{dem}
Clearly, (a) implies (b).

To prove that (b) implies (c), assume that there exists more than
one relation on some branch, say $w_1$. Consider the algebra $eAe$
where $e$ is the sum of all the idempotents associated to vertices
on the branch $w_1$. By \cite{HL}, $eAe$ is not quasi-tilted. Thus,
by \cite{AC}, $A$ is not quasi-tilted, a contradiction.

Suppose that there is exactly one relation by branch. Consider
$A=B[M]$ where $B=(1-e_0)A(1-e_0)$ and $M=\rad P_0$. We have that
$B$ is a tilted algebra and that $M$ is an injective decomposable
$B$-module where each summand of $M$ lies on the same slice in the
preinjective component of the Auslander-Reiten quiver of $B$. Thus,
$A$ has a slice in its Auslander-Reiten quiver and, by \cite{HR},
$A$ is tilted. Moreover, the slice in the Auslander-Reiten quiver of
$A$ is a toupie quiver with at most one linear quiver attached to
each branch. Each linear quiver corresponds to the relation on the
respective branch, and has length equal to the length of the
relation on the given branch minus one. The quantity of vertices on
each branch of $A$ corresponds to the same number of vertices of the
corresponding branch and linear quiver on the slice.
\end{dem}

\subsection{No branch is in the ideal}\label{ssec:NONE}

Throughout this subsection, $A=kQ/I$ is a toupie non simply
connected algebra such that $w_i \notin I$ for all $i \in \{1,
\ldots, t\}$, where $t$ is the number of branches of $Q$. We also
assume that $A$ is not hereditary.

In this case, we will show that $A$ is not a Laura algebra thus
cannot be tilted, quasi-tilted or weakly shod.

\begin{lem}
Let $A=kQ/I$ be a toupie non hereditary non simply connected algebra
such that $w_i \notin I$ for all $i \in \{1, \ldots, t\}$, where $t$
is the number of branches of $Q$. Then $A$ is not a Laura algebra.
\end{lem}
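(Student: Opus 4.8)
The plan is to show directly that $\ind A - (\mathcal L_A \cup \mathcal R_A)$ is infinite, which by the definition of laura rules out $A$ being laura. The starting observation is that since $w_i \notin I$ for every branch we have $m = \dim_k e_0 A e_\infty \geq 1$, so indecomposables that are nonzero at both $0$ and $\infty$ do occur (indeed $P_0$ and $I_\infty$ are such). Because $A$ is not hereditary there is a genuine minimal relation $\rho = \sum_{i \in J}\lambda_i w_i \in I$ with $|J| \geq 2$, and all branches occurring in $\rho$ lie in a single class $[w_i]$. Because $A$ is not simply connected, the criterion taken from \cite{LeMeur} gives two branches $w_r, w_s$ for which no minimal relation involves both, so the branches split into at least two relation-classes $\mathcal C_1, \mathcal C_2$, with $\mathcal C_1$ carrying the relation $\rho$ and $w_r \in \mathcal C_2$.

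First I would extract an infinite family of indecomposables from the non-simple-connectedness. Reading the \cite{LeMeur} criterion as the non-vanishing of the fundamental group, the pair $w_r$ (in $\mathcal C_2$) and $w_i$ (in $\mathcal C_1$) determines a reduced cyclic walk $b$ that climbs $0 \rightsquigarrow \infty$ along $w_i$ and descends $\infty \rightsquigarrow 0$ along $w_r$ reversed, and which is not a consequence of the commutativity relations precisely because $w_r$ and $w_i$ share no minimal relation. This band $b$ yields a one-parameter family of pairwise non-isomorphic indecomposable band modules $\{M_\lambda : \lambda \in k \setminus \{0\}\}$, all of the same dimension vector, each sincere on the two branches $w_i, w_r$ and in particular satisfying $(M_\lambda)_0 \neq 0$ and $(M_\lambda)_\infty \neq 0$. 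This is already infinitely many indecomposables; it remains to place them outside $\mathcal L_A \cup \mathcal R_A$.

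The second step is to show $M_\lambda \notin \mathcal L_A \cup \mathcal R_A$ for every $\lambda$, and here I would use the relation $\rho$ to produce the required dimensions. The minimal relation forces $\pd_A S_0 \geq 2$: the syzygy $\rad P_0$ fails to be projective exactly because the length-$\geq 2$ part of $P_0$ at $\infty$ is cut down by $\rho$, so a second syzygy supported near $\infty$ survives; dually $\id_A S_\infty \geq 2$. Using the path description of $\mathcal L_A$ and $\mathcal R_A$, it then suffices to exhibit, for each $\lambda$, a path $S_0 \rightsquigarrow M_\lambda$ (forcing $M_\lambda \notin \mathcal L_A$, since $\pd_A S_0 \geq 2$) together with a path $M_\lambda \rightsquigarrow S_\infty$ (forcing $M_\lambda \notin \mathcal R_A$, since $\id_A S_\infty \geq 2$). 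Both paths are meant to be built from the sincerity of $M_\lambda$: the nonzero map $P_0 \to M_\lambda$ coming from $(M_\lambda)_0 \neq 0$ should refine to a path out of $S_0$, and dually the nonzero map $M_\lambda \to I_\infty$ coming from $(M_\lambda)_\infty \neq 0$ into a path ending at $S_\infty$. Granting these, every $M_\lambda$ lies in the complement, which is therefore infinite, and $A$ is not laura.

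The main obstacle is the second step, and within it the honest, uniform construction of the two paths $S_0 \rightsquigarrow M_\lambda$ and $M_\lambda \rightsquigarrow S_\infty$; since $S_0$ is injective and $S_\infty$ is projective one cannot simply read these off, and the verification must use the band structure, factoring $P_0 \to M_\lambda$ and $M_\lambda \to I_\infty$ through the regular part containing the $M_\lambda$. A cleaner alternative I would also try is to bypass the path bookkeeping by proving directly that each band module satisfies $\pd_A M_\lambda \geq 2$ and $\id_A M_\lambda \geq 2$ — the band meets $\rho$ from both ends — whence $M_\lambda \notin \mathcal L_A \cup \mathcal R_A$ at once; the subtlety is that sincerity at $0$ and $\infty$ alone does not give high dimension (witness $P_0$), so the argument must genuinely use that $M_\lambda$ is regular rather than projective or injective. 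As a last fallback, a reduction via \cite{AC} is available: choose an idempotent $e$ so that $eAe$ is a smaller toupie algebra retaining non-simple-connectedness, a relation, and full branches, and descend to a base case whose failure of the laura property is checked by hand.
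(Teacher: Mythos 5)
Your high-level plan (exhibit infinitely many indecomposables with both homological dimensions at least two, if necessary after cutting by an idempotent and invoking \cite{AC}) is the right one and matches the paper's, but the concrete construction at its heart fails: your modules $M_\lambda$ are not $A$-modules. You chose $w_i$ in the class $\mathcal C_1$ carrying the minimal relation $\rho=\sum_{j\in J}\lambda_j w_j$ with $|J|\geq 2$, while $M_\lambda$ is supported only on $w_i$ and $w_r$; hence $M_\lambda$ vanishes on the interior vertices of every branch $w_j$ with $j\in J\setminus\{i\}$, and evaluating $\rho$ on $M_\lambda$ gives $\lambda_i M_\lambda(w_i)=0$, so the composite along $w_i$ must be zero. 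But the band structure requires that composite to be the nonzero scalar $1$ (or $\lambda$); if instead you force $M_\lambda(w_i)=0$, the walk degenerates to a string-type configuration in which the parameter can be normalized away, and the one-parameter family evaporates. Nor can you evade this by picking both branches from relation-free classes: these need not exist (take $t=3$, $I=\langle w_1+w_2\rangle$, all branches of length at least two --- non-hereditary, non-simply-connected, no branch in $I$, and only one singleton class). The underlying inference ``nontrivial fundamental group element $\Rightarrow$ band $\Rightarrow$ one-parameter family of band modules'' is a string-algebra phenomenon; a toupie algebra with linear-combination relations is not a string algebra, so this step would need independent proof and, as shown, is false as stated. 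Your second step has a further unresolved hole, which you acknowledge: the paths $S_0 \rightsquigarrow M_\lambda$ and $M_\lambda \rightsquigarrow S_\infty$ are never constructed, and since $S_0$ is injective and $S_\infty$ projective there is no cheap way to produce them.

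The paper avoids both problems at once. It sets $[w_1]=\{w_1,\ldots,w_r\}$ with $1<r<t$ and cuts by $e=e_0+e_\infty+\sum_{i=1}^r e_i$, so that $eAe$ is a small toupie with $r$ length-two branches and $s$ direct arrows $0\to\infty$, where $1\leq s<m$ and $m-s<r$; it then writes down an explicit family $N_\lambda$ with vector spaces $k^2$ at the relevant vertices --- the two-dimensional spaces are precisely what let a genuine parameter coexist with the minimal relation, which is where your one-dimensional band modules break --- and verifies by explicit projective and injective resolutions that $\pd N_\lambda=\id N_\lambda=2$. This makes the trivial path suffice: $N_\lambda\notin\mathcal L_{eAe}\cup\mathcal R_{eAe}$ immediately, with no $IP$-path bookkeeping, and \cite{AC} transports the failure back to $A$. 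Your ``cleaner alternative'' ($\pd M_\lambda\geq 2$ and $\id M_\lambda\geq 2$ directly) is exactly the paper's strategy, and your fallback paragraph gestures at exactly the paper's reduction, but in both cases the base-case verification you defer is the entire content of the proof; as submitted, the argument has no valid family of modules to apply it to.
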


\begin{dem}
Since $A$ is neither simply connected nor hereditary, we can assume
that $[w_1] = \{w_1, \ldots, w_r\}$ with $1 < r < t$.

Consider $e = e_0 + e_\infty + \sum\limits_{i=1}^r e_i$. The
quiver of $eAe$ is of the following form:

$$\xymatrix@-0.7pc{%
&& \bullet\save[]+<0pt,8pt>*{0}\restore \ar[lld] \ar[d]
\ar@/^1.5pc/[dd]^{\alpha_1} \ar@/^5pc/[dd]^{\alpha_s} & \\%
\save[]+<-5pt,0pt>*{1}\restore \bullet \ar[rrd] & \ldots & \bullet \save[]+<-5pt,0pt>*{r}\restore \ar[d] & \quad \ldots \\%
&& \bullet \save[]+<0pt,-8pt>*{\infty}\restore &
}$$

\noindent where $1 \leq s < m$. Recall that $m = dim_k e_0 A
e_\infty$ where $0$ is the unique source of $A$ and $\infty$ is the
unique sink of $Q$. Moreover, $m - s < r$ since r corresponds to the
cardinality of $[w_1]$.

Consider the family of indecomposable $eAe$-modules $N_\lambda$.
$$\xymatrix@-0.7pc{%
&&&&& k^2 \ar[llllld]_{f} \ar[lllld]^(.6){h} \ar[lld] \ar[d] \ar@/^1pc/[dd]^{1}
\ar@/^2pc/[dd]^{0} \ar@/^5pc/[dd]^{0} & \\%
k^2 \ar[rrrrrd]_{g} & k^2 \ar[rrrrd]^(.4){j_\lambda} && 0 \ar[rrd] &
\save[]+<8pt,0pt>*{\ldots}\restore & 0 \ar[d] & \quad \save[]+<10pt,0pt>*{\ldots}\restore \\%
&&&&& k^2 &
}$$

\noindent where $f= \left[\begin{array}{cc} 1 & 0 \\ 0 & 0\end{array} \right]$;
$g= \left[\begin{array}{cc} 0 & 1 \\ 0 & 1\end{array} \right]$;
$h= \left[\begin{array}{cc} 0 & 1 \\ 0 & 0\end{array} \right]$ and
$j_{\lambda}=\left[\begin{array}{cc} 0 & 1 \\ 0 & \lambda\end{array}\right]$.

Note that $N_{\lambda_1}\not\cong N_{\lambda_2}$ whenever
$\lambda_1\neq \lambda_2$. We will show that $N_\lambda$ has both
projective and injective dimension two. We have the following
projective resolution for $N_\lambda$:

$$ P_\infty^k \to P_1 \oplus P_ 2 \oplus (\bigoplus\limits_{i=3}^{r} P_i^2) \oplus P_\infty^{2s-1} \to P_0^2 \oplus P_1 \oplus P_2 \to N_\lambda$$

\noindent where $k = 2(r-m+s) -1 \geq 2(m -s+1 - m +s) - 1 = 2 -1
=1$. Therefore, $\pd N_\lambda = 2$. Similarly, one can see that
$\id N_\lambda =2$.


Consequently, no $N_\lambda$ lies in $\mathcal L_A \cup \mathcal
R_A$ and thus $eAe$ is not Laura. By \cite{AC}, $A$ is not Laura.
\end{dem}

\subsection{Some branches are in the ideal and some are not.}

Throughout this section, $A=kQ/I$ is a toupie non simply connected
algebra with $m = \dim_k e_0 A e_\infty > 0$, and there exists $w_i
\in I$ for some $i \in \{1, \ldots, t\}$ where $t$ is the number of
branches of $Q$. We can assume, without lost of generality, that
$i=1$.

In this case, we will show that if $m \geq 2$, the algebra $A$
cannot be Laura. When $m=1$, we will show that $A$ cannot be weakly
shod and that $A$ is a Laura algebra if and only if exactly one of
the branches lies in $I$.

\begin{Theorem}
Let $A=kQ/I$ to be a toupie non simply connected algebra such that
$m =
\dim_k e_0 A e_\infty \geq 2$ and $w_1 \in I$.\\

Then $A$ is not a Laura algebra.
\end{Theorem}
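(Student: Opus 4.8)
The plan is to prove that $A$ is not Laura by exhibiting infinitely many pairwise non-isomorphic indecomposables lying outside $\LUR$. By the remark following the definitions of $\mathcal{L}_A$ and $\mathcal{R}_A$, any module of projective dimension greater than one is outside $\mathcal{L}_A$ and any module of injective dimension greater than one is outside $\mathcal{R}_A$; so it suffices to construct an infinite family of indecomposables $M$ with $\pd M\geq 2$ and $\id M\geq 2$. Moreover, by \cite{AC} an idempotent truncation $B=eAe$ of a Laura algebra is again Laura, hence it is enough to produce such a family over a conveniently chosen $B=eAe$, and the whole argument will be carried out over this $B$.

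First I would record the combinatorial data. Writing $w_1=\alpha_1\cdots\alpha_s$, the admissibility of $I$ together with $w_1\in I$ yields a maximal nonzero initial subpath and a maximal nonzero terminal subpath of $w_1$; let $a$ be the end vertex of the former and $x_0$ the starting vertex of the latter, both interior vertices of $w_1$. Since $m\geq 2$, I also fix two branches $w_p,w_q$ (with $p,q\neq 1$, since $w_1\in I$) that are linearly independent modulo $I$. I then let $e$ be the sum of the idempotents at $0$, $\infty$, $a$ and $x_0$, adjoining if necessary the immediate $w_1$-neighbour of $a$ (resp.\ of $x_0$) so that the relevant contracted arrow does not vanish, and I contract all the remaining interior vertices of $w_1$, $w_p$ and $w_q$. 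In $B=eAe$ the branches $w_p$ and $w_q$ become two linearly independent arrows from $0$ to $\infty$, so $\{0,\infty\}$ carries a Kronecker configuration, while $w_1$ becomes a path through $a$ and $x_0$ whose total composite from $0$ to $\infty$ is still zero.

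Next I would write down the family and compute the dimensions. As $m\geq 2$, the Kronecker configuration on $\{0,\infty\}$ has infinitely many pairwise non-isomorphic indecomposable representations $M$ with $M_0\neq 0\neq M_\infty$, each of which is an indecomposable $B$-module that vanishes on every interior vertex. For such an $M$ the projective cover is a direct sum of copies of $P_0$; since the composite of $w_1$ is zero in $B$, the part of $P_0$ running along $w_1$ stops at $a$, and because $M$ vanishes there this part lies entirely in the first syzygy. As $a$ is not a sink of $B$, the summand $S_a$ appearing there is non-projective, so $\pd M\geq 2$. Dually, the injective envelope of $M$ is a direct sum of copies of $I_\infty$, the part running backwards along $w_1$ stops at $x_0$, and the cosyzygy contains the non-injective simple $S_{x_0}$, so $\id M\geq 2$. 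Hence every $M$ in the family lies outside $\mathcal{L}_B\cup\mathcal{R}_B$, the algebra $B$ is not Laura, and by \cite{AC} neither is $A$.

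The hard part, and the reason the truncation must be chosen with care, is making $\pd$ and $\id$ jump at the same time. Keeping all of $w_1$ would be fatal: the first syzygy of $M$ may then coincide with a projective of the form $P_a$ and, dually, the cosyzygy with an injective, so that one of the two dimensions falls back to $1$. Contracting the stretch of $w_1$ from $0$ up to $a$ is exactly what replaces that part by the simple $S_a$, and symmetrically for $x_0$ and the sink $\infty$. What then remains is a case analysis according to the position of the relations along $w_1$---near $0$, near $\infty$, several relations, or a gap between the two cuts---verifying that $S_a$ is always non-projective and $S_{x_0}$ always non-injective, together with the routine check that the modules $M$ are indecomposable and pairwise non-isomorphic over $B$, much as for the family built in Subsection~\ref{ssec:NONE}.
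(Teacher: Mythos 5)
Your proposal follows the same overall strategy as the paper's proof of this theorem: pass to an idempotent truncation $eAe$, exhibit a one-parameter family of pairwise non-isomorphic indecomposables supported at $0$ and $\infty$ (regular Kronecker-type modules built from two branches that are independent modulo $I$, exactly the paper's $N_\lambda$ with coefficients $1,\lambda,0,\ldots,0$ on the $m$ arrows), show both $\pd$ and $\id$ are at least two, and conclude via \cite{AC}. The genuine difference is the choice of $e$: the paper keeps \emph{every} vertex of $w_1$, so that $eAe$ is the full branch with its induced zero-relations plus $m$ arrows from $0$ to $\infty$, while you contract the interior of $w_1$ to the two cut vertices $a$ and $x_0$. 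Your caution about keeping all of $w_1$ is in fact warranted: the paper's claimed start of resolution $P_{i_1}\to P_1\oplus P_\infty^{m-1}\to P_0\to N_\lambda$ certifies $\pd N_\lambda\geq 2$ only when the first zero-relation on $w_1$ starts at $0$ (dually, $\id N_\lambda\geq 2$ needs the last one to end at $\infty$). For instance, if $w_1: 0\to 1\to 2\to\infty$ is bound only by the terminal length-two relation, then over the paper's truncation $\Omega^1 N_\lambda\cong P_1\oplus P_\infty^{m-1}$ is projective and $\pd N_\lambda=1$; your contraction repairs exactly this, since after collapsing, $P_0$ dies at $a$ while $P_a$ does not (thanks to the adjoined neighbour, or the arrow $a\to x_0$ or $a\to\infty$), and dually at $x_0$. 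So your variant is more robust than the paper's own argument on this point.

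Two caveats on your sketch. First, when the two cuts overlap (in the single-relation situation $x_0$ weakly precedes $a$ along $w_1$, so $0\rightsquigarrow x_0\notin I$), the $w_1$-part of the syzygy is not the summand $S_a$ you name but the interval module supported on $\{x_0,\ldots,a\}$ with top $S_{x_0}$; it is still non-projective because $x_0\rightsquigarrow\infty\notin I$ forces $P_{x_0}$ to reach $\infty$, so the deferred case analysis you announce is genuinely needed, though it does go through in all four configurations. Second, you should state explicitly that the remaining branches $w_i$ with $i\neq 1,p,q$ are also contracted, so that $e_0(eAe)e_\infty$ contributes exactly $m\geq 2$ arrows from $0$ to $\infty$ (all composites through the kept $w_1$-vertices vanish since $w_1\in I$); the ``Kronecker configuration'' is then really the $m$-Kronecker one on which your regular family lives, matching the paper's family, and indecomposability and pairwise non-isomorphy follow as in Subsection~\ref{ssec:NONE}.
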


\begin{dem}
Consider $e = \sum\limits_{i \in w_1} e_i$, the sum of all
idempotents associated to vertices in the path $w_1$. The quiver of
$eAe$ has the following shape:

$$\xymatrix@-0.7pc{%
& \bullet\save[]+<0pt,8pt>*{0}\restore \ar[ld] \ar[dddd]^{\alpha_1}
\ar@/^1.5pc/[dddd]^{\alpha_2} \ar@/^5pc/[dddd]^{\alpha_m} & \\%
\save[]+<-5pt,0pt>*{1}\restore \bullet \ar[d] &  &  \\%
\vdots \ar[d] & & \quad \ldots\\%
\bullet \ar[rd] &&\\%
& \bullet \save[]+<0pt,-8pt>*{\infty}\restore &
}$$

\noindent with some induced zero-relations in $w_1$.

Consider the family of indecomposable $eAe$-modules $N_\lambda$.
$$\xymatrix@-0.7pc{%
& k \ar[ld] \ar[dddd]^{1}  \ar@/^1pc/[dddd]^{\lambda}
\ar@/^2.5pc/[dddd]^{0} \ar@/^5pc/[dddd]^{0} & \\%
0 \ar[d] &  &  \\%
\vdots \ar[d] & & \quad \quad \ldots\\%
0 \ar[rd] &&\\%
& k  & }$$

Note that $N_{\lambda_1}\not\cong N_{\lambda_2}$ whenever
$\lambda_1\neq \lambda_2$. We will show that these modules have both
projective and injective dimension two. We have the following start
of projective resolution for $N_\lambda$:

$$ P_{i_1} \to P_1 \oplus P_\infty^{m-1} \to P_0 \to N_\lambda$$

\noindent where $i_1$ is such that $0 \rightsquigarrow i_1-1 \notin
I$ and $0 \rightsquigarrow i_1 \in I$. Therefore, $\pd N_\lambda
\geq 2$. In the same way, one can see that $\id N_\lambda \geq 2$.
Therefore $N_\lambda \not\in \mathcal L_A \cup R_A$ and thus $eAe$
is not Laura. By \cite{AC}, $A$ is not Laura.

%

\end{dem}

We now consider the case $m=1$. First, we will show that if $A$ is a
weakly shod algebra, then every $IP$-path must be trivial.

\begin{lem}
Let $A=kQ/I$ be a toupie non simply connected algebra such that
$m=\dim_k e_0 A e_\infty > 0$. If $A$ is a weakly shod algebra, then
every $IP$-path must be trivial.
\end{lem}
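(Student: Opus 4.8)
The plan is to argue by contradiction, using the characterization (see \cite{CL03}) according to which a weakly shod algebra is exactly one for which the lengths of all $IP$-paths are bounded. Thus it suffices to prove that if $m>0$ and $A$ admits a nontrivial $IP$-path, then $A$ admits $IP$-paths of arbitrarily large length, contradicting weak shodness.

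First I would reduce to the extremal injective $I_\infty$ and projective $P_0$. As already observed in Subsection 3.1, since $\infty$ is a sink there is a path $I_\infty \rightsquigarrow I_x$ from $I_\infty$ to every indecomposable injective; dually, since $0$ is a source there is a path $P_x \rightsquigarrow P_0$ to $P_0$ from every indecomposable projective (this is the previous fact applied to $A^{\mathrm{op}}$, under which the roles of $0$ and $\infty$, and of injectives and projectives, are exchanged). Hence any nontrivial $IP$-path $I_a \rightsquigarrow P_b$ prolongs to $I_\infty \rightsquigarrow I_a \rightsquigarrow P_b \rightsquigarrow P_0$, which is still an $IP$-path and still nontrivial; so we may assume from the start that there is a nontrivial path $I_\infty \rightsquigarrow P_0$.

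Next I would use the hypothesis $m>0$ to close this path into a cycle. Since $P_0=e_0A$ is projective, $\Hom(P_0,I_\infty)\cong (I_\infty)_0$, a space of dimension $m>0$; so there is a non-zero morphism $f\colon P_0 \to I_\infty$. If $f$ is not an isomorphism it is a length-one path $P_0 \to I_\infty$, and the concatenation $I_\infty \rightsquigarrow P_0 \xrightarrow{f} I_\infty$ is a nontrivial cyclic path; if on the contrary every non-zero morphism $P_0 \to I_\infty$ is invertible, then $P_0\cong I_\infty$ and the path $I_\infty \rightsquigarrow P_0$ is itself a nontrivial cycle at this module. In either case, traversing the cycle $n$ times and stopping at $P_0$ yields an $IP$-path from the injective $I_\infty$ to the projective $P_0$ whose length tends to infinity with $n$, contradicting the boundedness of $IP$-paths. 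Therefore no nontrivial $IP$-path exists, that is, every $IP$-path is trivial.

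The step I expect to be the main obstacle is the reduction to $I_\infty$ and $P_0$: one must be sure that $I_\infty$ really precedes, and $P_0$ really succeeds, the relevant injective and projective through genuine (non-zero, non-isomorphism) paths, and one must handle cleanly the degenerate case $P_0\cong I_\infty$. The first point rests on the sink/source structure of the toupie already exploited in Subsection 3.1 together with the $A \leftrightarrow A^{\mathrm{op}}$ duality, while the second is settled by the case distinction above; once these are in place, the cyclic construction and the appeal to the $IP$-path characterization of weakly shod algebras are immediate.
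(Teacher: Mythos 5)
Your proof is correct and takes essentially the same route as the paper's: both use $m>0$ to get a non-zero morphism $P_0 \to I_\infty$ (since $\Hom_A(P_0,I_\infty)\cong (I_\infty)_0$), use the sink/source structure of the toupie to extend any non-trivial $IP$-path to one of the form $I_\infty \rightsquigarrow P_0$, and close it into a non-trivial cycle through injective and projective modules, contradicting weak shodness via \cite{CL03}. Your only additions are cosmetic refinements: you spell out the contradiction by iterating the cycle against the bounded-length characterization of $IP$-paths, and you explicitly handle the degenerate case $P_0\cong I_\infty$, which the paper leaves implicit.
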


\begin{dem}
Assume that $I \rightsquigarrow P$ is a non-trivial $IP$-path. Since
$m > 0$, we have that $(I_\infty)_0 \neq 0$. In particular,
$\Hom_A(P_0, I_\infty) \neq 0$. Since $A$ is a toupie algebra, we
have paths $P \rightsquigarrow P_0$ and $I_\infty \rightsquigarrow
I$. Thus, we get

$$ I_\infty \rightsquigarrow I \rightsquigarrow P \rightsquigarrow P_0 \to I_\infty$$

This gives us a non-trivial cycle in $\ind A$ passing through
injectives and projectives modules. In particular, by \cite{CL03},
$A$ is not weakly shod.
\end{dem}

Thus, in order to show that $A$ is not weakly shod, it suffices to
construct a non-trivial $IP$-path.

\begin{lem}
Let $A=kQ/I$ to be a toupie non simply connected algebra such that
$m=\dim_k e_0 A e_\infty = 1$ and $w_1 \in I$.  Then $A$ is not
weakly shod.
\end{lem}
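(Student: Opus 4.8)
The goal is to prove that a toupie non simply connected algebra $A = kQ/I$ with $m = \dim_k e_0 A e_\infty = 1$ and $w_1 \in I$ is not weakly shod. By the preceding lemma, $A$ is not weakly shod as soon as there exists a non-trivial $IP$-path; so the entire task reduces to constructing one explicit non-trivial path from an indecomposable injective to an indecomposable projective.

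The plan is to exploit the hypotheses directly. Since $m=1$, there is exactly one branch, say $w_2$, that is not in $I$ (up to relabelling); this gives $(I_\infty)_0 \neq 0$ so that $\Hom_A(P_0,I_\infty)\neq 0$, and in fact $P_0$ and $I_\infty$ interact along the surviving branch $w_2$. Since $w_1 \in I$, the branch $w_1$ carries a genuine zero-relation, which is the source of an indecomposable of projective or injective dimension at least two and, more importantly, of a non-trivial connecting map. First I would locate on the branch $w_1$ the first vertex $i_1$ at which a relation closes, i.e. $0 \rightsquigarrow i_1 - 1 \notin I$ but $0 \rightsquigarrow i_1 \in I$, exactly as in the projective resolution computed in the theorem above. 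The module $I_\infty$ restricted to $w_1$ does not extend past this relation, so $I_\infty/\soc(I_\infty)$ has an indecomposable summand whose support along $w_1$ is cut off, producing a non-isomorphism $I_\infty \to N$ that is not invertible.

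The heart of the construction is to splice two partial paths. On one side, the relation on $w_1$ forces a non-split map out of the injective $I_\infty$; on the other side, because $w_2 \notin I$ the full branch $w_2$ lives in a projective, giving a non-trivial map into $P_0$ (or into a projective supported on $w_2$). I would then connect these through the top/socle structure: concretely, I expect a chain
$$ I_\infty \to N \rightsquigarrow P $$
where $N$ is an indecomposable summand of $I_\infty/\soc(I_\infty)$ and the tail $N \rightsquigarrow P$ runs down an open branch to an indecomposable projective $P$ whose radical is supported where the relation on $w_1$ bites. The point is that the composite is a genuine path of non-zero non-isomorphisms, and it is non-trivial precisely because $I_\infty \neq P$ and at least one of the intermediate maps fails to be an isomorphism, which is guaranteed by the presence of the relation $w_1 \in I$.

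The main obstacle I anticipate is verifying non-triviality rigorously, namely checking that every map in the constructed chain is simultaneously non-zero and a non-isomorphism, and that the chain really starts at an injective and ends at a projective rather than collapsing. This requires a careful bookkeeping of which spaces $M_x$ and maps $M_\alpha$ are non-zero, using Corollary \ref{cor:topsoc} and the explicit location of $i_1$; the subtlety is that with $m=1$ the injective $I_\infty$ and the projective $P_0$ are tightly linked, so one must ensure the path does not degenerate into an isomorphism or a zero map. Once a single non-trivial $IP$-path is exhibited, the previous lemma immediately yields that $A$ is not weakly shod, completing the proof.
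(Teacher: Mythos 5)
Your overall strategy agrees with the paper's at the top level---by the preceding lemma (which applies since $m=1>0$) it suffices to exhibit one non-trivial $IP$-path---but the way you produce that path is genuinely different from the paper's, and it can be made to work. The paper passes to $eAe$ with $e=\sum_{i\in w_1}e_i$, so that only the branch $w_1$ and a single shortcut arrow $0\to\infty$ survive, defines the explicit module $N$ with $N_0=N_\infty=k$ carried by the shortcut, computes a partial projective resolution $P_{i_1}\to P_1\to P_0\to N$ (and its dual) to get $\pd N\geq 2$ and $\id N\geq 2$, and then threads the $IP$-path through the Auslander--Reiten translates, $I \rightsquigarrow \DTr N \rightsquigarrow N \rightsquigarrow \DTr^{-1}N \rightsquigarrow P$, finally lifting from $eAe$ to $A$ via \cite{AC}. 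Your route is more elementary: it uses neither $\tau$ nor the cut to $eAe$, only explicit homomorphisms in $\md A$. And it does close up. Let $j$ be the first vertex of $w_1$ with $j\rightsquigarrow\infty\notin I$ and $\ell$ the predecessor of $\infty$ on $w_1$ (note $j\leq\ell$ since $I$ is admissible, so $w_1$ has an interior). The uniserial segment $N=[j\to\cdots\to\ell]$ is exactly the summand of $I_\infty/\soc I_\infty$ you describe: the incoming map at $j$ vanishes (its source is zero, or, when $j=1$, it is dual to precomposition with $\alpha_1$ and $w_1\in I$), so the segment splits off. If $p$ is the predecessor of $j$, then $p\rightsquigarrow\infty\in I$ by minimality of $j$, so there is a minimal $v$ with $p\rightsquigarrow v\in I$, and $v>j$ since arrows do not lie in $I$; composing the quotient of $N$ onto the segment from $j$ to the predecessor of $v$ with the inclusion of that segment into $P_p$ gives a non-zero non-isomorphism (non-iso because $\tp P_p=S_p\neq S_j$), whence the non-trivial $IP$-path $I_\infty\to N\to P_p$ entirely inside $\md A$.

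Two corrections, the second of which is where your write-up actually stops short. First, your opening claim that $m=1$ forces exactly one branch outside $I$ is false: several branches may survive while being pairwise congruent modulo $I$ (e.g.\ $w_2-w_3\in I$ still gives $m=1$); fortunately all you use is $(I_\infty)_0\neq 0$, i.e.\ $m>0$, which is what the preceding lemma requires. Second, the target projective is in general $P_p$ as above---not $P_0$ (unless $j=1$), and certainly not a projective ``supported on $w_2$.'' Your phrase about a projective whose radical sits where the relation on $w_1$ bites is the right idea, but you explicitly flag the verification of the tail $N\rightsquigarrow P$ as an anticipated obstacle and leave it unproved; since that is precisely the decisive step, the proposal as submitted is a sketch there, and the completion indicated above is what was missing.
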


\begin{dem}
Consider $e = \sum\limits_{i \in w_1} e_i$, the sum of all idempotents
associated to vertices in the path $w_1$. The quiver of $eAe$ is of the following form:

$$\xymatrix@-0.7pc{%
& \bullet\save[]+<0pt,8pt>*{0}\restore \ar[ld] \ar[dddd]\\%
\save[]+<-5pt,0pt>*{1}\restore \bullet \ar[d]   \\%
\vdots \ar[d] \\%
\bullet \ar[rd] \\%
& \bullet \save[]+<0pt,-8pt>*{\infty}\restore &
}$$

\noindent with some induced zero-relations in $w_1$.

Consider the indecomposable $eAe$-module $N$.
$$\xymatrix@-0.7pc{%
& k \ar[ld] \ar[dddd]^{1} \\%
0 \ar[d] &  \\%
\vdots \ar[d] & \\%
0 \ar[rd] &\\%
& k
}$$

We will show that this module belongs to a non-trivial $IP$-path. It
is sufficient to show that both the projective and injective
dimensions of $N$ are at least two. We have the following start of
projective resolution for $N$:

$$ P_{i_1} \to P_1 \to P_0 \to N$$

\noindent where $i_1$ is such that $0 \rightsquigarrow i_1-1 \notin
I$ and $0 \rightsquigarrow i_1 \in I$. Therefore, $\pd N \geq 2$.
Similarly, one sees that $\id N \geq 2$. This gives us the desired
non trivial $IP$-path

$$I \rightsquigarrow \DTr N \rightsquigarrow N \rightsquigarrow \DTr {-1} N  \rightsquigarrow P$$

By the above lemma, $eAe$ is not weakly shod and thus by \cite{AC}, $A$ is not weakly shod.
\end{dem}

It remains to show that if $m=1$, then $A$ is Laura precisely when
exactly one branch lies in $I$.

\begin{lem}
Let $A=kQ/I$ to be a toupie non simply connected algebra such that
$m=\dim_k e_0 A e_\infty = 1$ and $w_1 \in I$. If $A$ is a Laura
algebra then $w_1$ is the unique branch in $I$.
\end{lem}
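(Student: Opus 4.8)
The plan is to prove the contrapositive: assuming that some branch other than $w_1$, say $w_2$, also lies in $I$, I will show that $A$ is not Laura. Since $m=1>0$, at least one branch is not in $I$; fix such a branch and call it $w_3$. Following the pattern of Subsections 3.1--3.3, I would reduce to a convenient idempotent subalgebra and then produce an infinite family of indecomposables lying outside $\mathcal{L}_A\cup\mathcal{R}_A$. Concretely, let $e$ be the sum of the idempotents attached to all vertices of $w_1$ and of $w_2$, together with $e_0$ and $e_\infty$, every remaining vertex being collapsed. Then $eAe$ is again a toupie algebra whose branches are $w_1$ and $w_2$ (carrying the zero-relations that force $w_1,w_2\in I$) together with a single arrow $\beta\colon 0\to\infty$ coming from the one-dimensional space $e_0Ae_\infty$; in particular $\dim_k e_0(eAe)e_\infty=1$ as well. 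By \cite{AC} it then suffices to prove that this $eAe$ is not Laura.

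For this I would reuse the family $N_\lambda$ introduced in Subsection 3.2: place $k^2$ at the source, at the sink and at the interior vertices of $w_1$ and $w_2$, let $\beta$ act as the identity of $k^2$, and use on $w_1$ the maps $f,g$ and on $w_2$ the maps $h,j_\lambda$ of that subsection. The essential feature is that $gf=0$ and $j_\lambda h=0$, so the composites along $w_1$ and along $w_2$ both vanish --- which is exactly what $w_1,w_2\in I$ demands --- while all four maps $f,g,h,j_\lambda$ are non-zero; hence each $N_\lambda$ is a well-defined $eAe$-module. I expect the endomorphism analysis to run as follows: the identity action of $\beta$ forces any homomorphism to act by one and the same map $\phi$ at $0$ and at $\infty$, the two branches then force $\phi$ to be a scalar, so $\mathrm{End}(N_\lambda)$ is local and $N_\lambda$ is indecomposable; applying the same computation to a would-be isomorphism $N_{\lambda_1}\to N_{\lambda_2}$ and comparing the actions on $w_2$ yields $\lambda_1=\lambda_2$. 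Thus $\{N_\lambda\}_{\lambda\in k}$ is an infinite family of pairwise non-isomorphic indecomposables.

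It then remains to show that each $N_\lambda$ lies outside $\mathcal{L}_A\cup\mathcal{R}_A$, for which it is enough to check $\pd N_\lambda\geq 2$ and $\id N_\lambda\geq 2$. I would do this exactly as in the preceding lemmas, by writing down the beginning of a projective resolution of $N_\lambda$ and observing that the zero-relation on each dead branch prevents the first syzygy from being projective, whence $\pd N_\lambda\geq 2$; the injective dimension is handled dually. Consequently no $N_\lambda$ belongs to $\mathcal{L}_A\cup\mathcal{R}_A$, so $\ind(eAe)-(\mathcal{L}_{eAe}\cup\mathcal{R}_{eAe})$ is infinite and $eAe$ is not Laura. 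By \cite{AC}, $A$ is not Laura, which proves the contrapositive.

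The main obstacle is the construction and analysis of this family. In contrast with the case $m\geq 2$, where two independent direct arrows $0\to\infty$ already form a Kronecker pair that carries the parameter $\lambda$, here there is only the single free path $\beta$, so the parameter must instead be produced by coupling the two dead branches $w_1$ and $w_2$ through $\beta$. This is precisely why one is forced to work with two-dimensional spaces at every relevant vertex, so that both halves $\alpha_i$ and $\gamma_i$ of each dead branch can be non-zero while their composite still vanishes. Verifying simultaneously that the $N_\lambda$ are indecomposable, pairwise non-isomorphic, and of projective and injective dimension at least two will be the delicate step; once that is in place, the passage from $eAe$ back to $A$ via \cite{AC} is routine.
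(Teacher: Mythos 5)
Your overall strategy---proving the contrapositive, cutting down to an idempotent subalgebra $eAe$, exhibiting a one-parameter family of pairwise non-isomorphic indecomposables of projective and injective dimension at least two, and passing back to $A$ via \cite{AC}---is exactly the paper's. But your concrete construction has a genuine gap, located precisely at what you call the ``essential feature'': you assert that the only constraint imposed by $w_1,w_2\in I$ on a representation is the vanishing of the \emph{total} composites along $w_1$ and $w_2$. That is false in general. Because you keep \emph{all} vertices of $w_1$ and $w_2$, the algebra $eAe$ inherits every relation supported on these branches, and $w_i\in I$ may be witnessed by a zero relation on a proper subpath (or by several relations), not by the full path being a generator. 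For instance, if $w_1=\alpha_1\alpha_2\alpha_3$ with $\alpha_2\alpha_3\in I$, your $N_\lambda$---which places $f$ on the first arrow, $g$ on the last, and (implicitly) identities on the interior arrows---satisfies $N_\lambda(\alpha_3)N_\lambda(\alpha_2)=g\neq 0$, so it is not an $eAe$-module at all. The same unexamined possibility of interior or multiple relations also undermines the step you deferred (``exactly as in the preceding lemmas'') where $\pd N_\lambda\geq 2$ and $\id N_\lambda\geq 2$ are to be checked.

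The paper circumvents exactly this by truncating each dead branch instead of keeping it whole: letting $j_i$ be the first vertex with $0\to i\rightsquigarrow j_i\in I$, it takes $e$ to consist of $e_0$ together with the vertices from the \emph{predecessor} of $j_i$ down to $\infty$ on each of $w_1,w_2$. In the resulting $eAe$ the induced relation on each branch begins with a single length-two zero relation at the source, and the module used places $k^2$ only at $0$ and $\infty$ (coupled through the surviving arrow $0\to\infty$, which carries the parameter via $g=\left[\begin{smallmatrix}1\\ 1\end{smallmatrix}\right]$ and $j_\lambda=\left[\begin{smallmatrix}1\\ \lambda\end{smallmatrix}\right]$), $k$ at the four branch-end vertices, and $0$ at all interior vertices---so any further induced relations are vacuously satisfied, and the first syzygy visibly contains a nonzero quotient of $P_{j_1}\oplus P_{j_2}$, yielding $\pd N_\lambda\geq 2$ and dually $\id N_\lambda\geq 2$. (The paper also handles the degenerate case $j_i=\infty$ explicitly.) Your construction could be repaired along these lines---truncate as the paper does and kill the interior vector spaces---but as written the family $N_\lambda$ is ill-defined whenever a dead branch carries an interior or a second relation, which is precisely the generality the lemma must cover.
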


\begin{dem}
Assume that $w_1$ is not the unique branch in $I$. We can assume
that $w_2 \in I$. For each $i =1, 2$, let $j_i$ be the first vertex
such that $0 \to i \rightsquigarrow j_i \in I$.

Consider $e = e_0 + \sum\limits_{i} e_i$ where the sum is taken over
all idempotents associated to vertices in the subpath of $w_1$ that
begins by the predecessor of $j_1$ and ends at $\infty$ and in the
subpath of $w_2$ that begins by the predecessor of $j_2$ and ends at
$\infty$. The quiver of $eAe$ has the following shape:

$$\xymatrix@-0.7pc{%
& \bullet\save[]+<0pt,8pt>*{0}\restore \ar[ld] \ar[d]
\ar@/^2pc/[dddddd] \\%
\save[]+<-5pt,0pt>*{1}\restore \bullet \ar[d] & \save[]+<-5pt,0pt>*{2}\restore \bullet \ar[d]   \\%
\save[]+<-5pt,0pt>*{3}\restore \bullet \ar[d] & \save[]+<-5pt,0pt>*{4}\restore \bullet \ar[d]\\%
\vdots \ar[d] & \vdots \ar[d] \\%
\bullet \ar[d] &\bullet \ar[d]\\%
\save[]+<-5pt,0pt>*{r}\restore \bullet \ar[rd] & \save[]+<-13pt,0pt>*{r+1}\restore \bullet \ar[d]\\%
& \bullet \save[]+<0pt,-8pt>*{\infty}\restore
}$$

\noindent with induced zero-relations of $w_1$ and $w_2$. Observe
that $0 \to 1 \to 3$ is the relation induced by $0 \to 1
\rightsquigarrow j_1$ and that $0 \to 2 \to 4$ is the relation
induced by $0 \to 2 \rightsquigarrow j_2$. Vertices $1$ and $2$ in
$eAe$ correspond to the predecessors of $j_1$ and $j_2$.

Consider the family of indecomposable $eAe$-modules $N_\lambda$.
$$\xymatrix@-0.7pc{%
& k^2 \ar[ld]_f \ar[d]^h
\ar@/^5pc/[dddddd]^1 \\%
k \ar[d] & k \ar[d]   \\%
0 \ar[d] & 0 \ar[d]   \\%
\vdots \ar[d] & \vdots \ar[d] \\%
0 \ar[d] & 0 \ar[d]   \\%
k \ar[rd]_g & k \ar[d]^{j_\lambda}\\%
& k^2
}$$

\noindent where $f= \left[\begin{array}{cc} 0 \\ 1\end{array}
\right]$; $h= \left[\begin{array}{cc} 1 \\ 0\end{array} \right]$;
$g= \left[\begin{array}{cc} 1 & 1 \end{array} \right]$ and
$j_{\lambda}=\left[\begin{array}{cc} 1 & \lambda
\end{array}\right]$.

Observe that if one or both of the $j_i, i=1,2$ are the vertices
$\infty$, then the vector space $(N_\lambda)_i $is $k \oplus k$ with
the induced maps. Note that $N_{\lambda_1}\not\cong N_{\lambda_2}$
whenever $\lambda_1\neq \lambda_2$.

We will show that both the projective and injective dimensions of
each $N_\lambda$ is greater or equal to two. We have the following
projective resolution for $N_\lambda$.

$$ 0 \to X \oplus P_\infty^2 \to P_1 \oplus P_ 2 \oplus P_\infty^{2} \to P_0^2 \oplus P_r \oplus P_{r+1} \to N_\lambda$$

\noindent where $X$ corresponds to some quotient of $P_3 \oplus
P_4$. Therefore, $\pd N_\lambda \geq 2$. Similarly, one can see that
$\id N_\lambda \geq 2$. Therefore  $N_\lambda \not\in \mathcal L_A
\cup \mathcal R_A$ and thus $eAe$ is not Laura. By \cite{AC}, $A$ is
not Laura.

\end{dem}

Let $A$ be a toupie non simply connected algebra such that
$m=\dim_k e_0 A e_\infty = 1$.
Assuming that $w_1$ is the unique $w_i$ in the ideal $I$,
we will show that $A$ is a Laura algebra.


We will define a finite family of modules witch are not necessarily $A$-modules. Later one,
we will show that every indecomposables modules $A$-module witch are not lying in $\LUR$ are of this
form or modules on the support of $w_1$.

Let $x$ and $y$ be vertices belong in the path $w_1$. If there exists a path from $x$ to $y$, then
we define $D_{xy}$ to be the module such that $D_{xy}(a)$ is $0$ if $a$ belongs to
the path from $x$ to $y$ and $k$ if not, and $D_{xy}(\alpha)$ is the identity on $k$ if it is possible and $0$ if not. If there exist a path from $y$ to $x$, we define $D_{xy}$ to be the module such that $D_{xy}(a)$ is $k^2$ if $a$
belongs to the path from $y$ to $x$ and $k$ if not, and $D_{xy}(\alpha)$ is the inclusion in the first coordinate when $t(\alpha) = y$ or the projection in the second coordinate when $s(\alpha) = x$ or the identity on $k$ elsewhere.

First of all, we show that if an indecomposable module doesn't vanish in $0$ nor in $\infty$ then it should be of the above form.

\begin{lem}\label{lem:lem1}
Let $A$ be a toupie non simply connected algebra such that
$m=\dim_k e_0 A e_\infty = 1$ and $w_1$ is the unique branch in $I$.
Let $M \in \ind A$.

If $0$ and $\infty$ belongs to the support of $M$ then
$M \cong D_{xy}$ for some $x$ and $y$ in the support of $w_1$.
\end{lem}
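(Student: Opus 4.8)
My plan is to reduce the statement to a one-branch problem: I will show that an indecomposable $M$ with $M_0\neq 0$ and $M_\infty\neq 0$ is completely ``transparent'' along every branch other than $w_1$, so that the only genuine data is the sequence of maps along $w_1$, whose total composite is forced to vanish because $w_1\in I$. Classifying such configurations then yields exactly the modules $D_{xy}$. First I would record the constraints coming from the ideal. Since $w_1\in I$, the composite of the maps of $M$ along $w_1$ is $0$; since $m=1$ and $w_i\notin I$ for $i\ge 2$, the composites $M_{w_i}\colon M_0\to M_\infty$ are all nonzero and pairwise proportional, say $M_{w_i}=\mu_i M_{w_2}$ with $\mu_i\neq 0$. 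I also note that, as $0$ is a source, $(\tp M)_0=M_0$, and as $\infty$ is a sink, $(\soc M)_\infty=M_\infty$. Finally, for any interior vertex $v$ (with unique incoming arrow $\beta$ and outgoing arrow $\gamma$) the simple $S_v$ is a direct summand of $M$ precisely when $\ker M_\gamma\not\subseteq\mathrm{im}\,M_\beta$; hence, $M$ being indecomposable and not simple, at every interior vertex we have $\ker M_\gamma\subseteq\mathrm{im}\,M_\beta$.

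The key step, and the one I expect to be the main obstacle, is the following transparency claim: for each $i\ge 2$ every structure map of $M$ along $w_i$ is an isomorphism; in particular $\phi:=M_{w_2}\colon M_0\to M_\infty$ is an isomorphism. I would prove this by a peeling/decomposition argument in the spirit of Lemma \ref{lem:zero}: if some map along $w_i$ fails to be bijective then, using that the interior vertices of $w_i$ carry no relations (for otherwise $w_i\in I$) and are attached to the rest of $Q$ only through $0$ and $\infty$, one splits off a proper direct summand of $M$ supported on the interior of $w_i$ (a simple, via the criterion above, or more generally an interval summand coming from the type-$\mathbb{A}$ decomposition of the restriction of $M$ to $w_i$). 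The delicate point is the interaction of the branches at the two endpoints $0$ and $\infty$, where the interior-vertex criterion does not apply: here one first shows $\phi\neq 0$ (otherwise every $M_{w_i}=0$, the unique surviving path $0\rightsquigarrow\infty$ acquires a break, and $M$ splits into a part carrying $M_0$ and a part carrying $M_\infty$), and only then upgrades ``$\phi\neq0$'' to ``$\phi$ is an isomorphism'' together with bijectivity of all the intermediate maps.

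Granting transparency, I would finish as follows. Using the isomorphism $\phi$ to identify $M_\infty$ with $M_0=:U$, and absorbing the invertible maps along $w_3,\dots,w_t$, the module $M$ carries the same information as a representation of the cyclic quiver obtained from $w_1$ by glueing $\infty$ to $0$, subject to the relations induced on $w_1$, in particular to the single relation that the full cycle (the image of $w_1$) is zero. This quotient is a cyclic Nakayama algebra $k\widetilde{\mathbb{A}}/(w_1)$, whose indecomposables are the uniserial (interval) modules; moreover, each of its indecomposable projectives has one-dimensional space at the glued vertex, so every uniserial module, being a quotient of such a projective, has $\dim U\le 1$. This forces $\dim M_0=\dim M_\infty=1$.

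It remains to translate the interval back to the branch $w_1$, which gives exactly two shapes. A non-wrapping interval necessarily misses an interior sub-path $x\rightsquigarrow y$ of $w_1$ (value $k$ off it, $0$ on it, since the full cycle cannot survive), and this is $D_{xy}$ in the first case of the definition; an interval wrapping once around doubles on an interior sub-path $y\rightsquigarrow x$ (value $k^2$ there and $k$ elsewhere, with $\dim M_0=1$ at the glued vertex), and this is $D_{xy}$ in the second case. In both situations the total composite along $w_1$ is zero, as required by $w_1\in I$, and the non-$w_1$ branches transmit $M_0$ isomorphically to $M_\infty$. Hence $M\cong D_{xy}$ for suitable $x,y$ in the support of $w_1$, completing the proof.
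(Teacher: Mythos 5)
There is a genuine gap, and it sits exactly where you flag ``the delicate point'': the upgrade from $\phi\neq 0$ to ``$\phi$ is an isomorphism together with bijectivity of all intermediate maps along $w_i$, $i\geq 2$'' is asserted but never argued, and it is the actual content of the lemma. Your peeling argument correctly disposes of interval summands of $M|_{w_i}$ supported on interior vertices, but once $\phi\neq0$ you must still exclude intervals of type $[0,j]$ or $[j',\infty]$ with one endpoint interior (equivalently: a vector of $M_0$ dying partway along a branch $w_i\notin I$, a socle contribution $S_j$ or top contribution $S_{j'}$ at an interior vertex of $w_i$, or $\dim M_0$ exceeding the rank of $\phi$). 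These intervals touch the vertex $0$ or $\infty$, where all $t$ branches interact, so they do \emph{not} split off from $M$, and the interior-vertex criterion you set up is silent about them; ruling them out requires a global use of indecomposability (for instance an endomorphism or change-of-basis analysis at $0$ and $\infty$). This is precisely the step the paper's proof supplies, in the form of its (terse) claim that $\tp M$ contains at most two simples, located at $0$ and at a vertex $y$ of $w_1$, and dually $\soc M$ at $\infty$ and at $x\in w_1$ --- i.e.\ no top or socle at interior vertices of the branches $w_2,\dots,w_t$, which is equivalent to your transparency claim. A related, smaller slip: in your first paragraph you record ``the composites $M_{w_i}$ are all nonzero'' as a consequence of $w_i\notin I$, which is false as stated (a module may annihilate a nonzero element of $A$); you do revisit this, and your sketch that $\phi=0$ forces a splitting is repairable along the lines of Lemma \ref{lem:zero}, so that part is fine.

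On the other hand, the endgame of your proposal, granting transparency, is correct and genuinely different from the paper's. The paper finishes by cutting with idempotents: $eMe$ (off the interior of $w_1$) is identified with the generic one-dimensional module, $e_wMe_w$ (on $w_1$) with two uniserial pieces, and the isomorphisms are glued. Your reduction instead identifies $M_\infty$ with $M_0$ via $\phi$ and realizes $M$ as a uniserial module over the cyclic Nakayama algebra obtained by closing $w_1$ into a cycle killed by the induced relations; the observation that every indecomposable projective there has one-dimensional space at the glued vertex, hence $\dim M_0=\dim M_\infty=1$, is a cleaner way to pin the classification than the paper's ``one can see'' socle/top count. One boundary case to watch in the translation back: a non-wrapping interval covering every vertex of the cycle exactly once gives $k$ at every vertex of $Q$ with a single zero arrow on $w_1$ and no vertex missed, a degenerate shape your dichotomy (and, to be fair, the paper's literal definition of $D_{xy}$) glosses over. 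But none of this repairs the missing transparency argument, without which the proof does not go through.
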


\begin{dem}
Fist of all, observe that we can choose a presentation of $A = kQ/I$
such that $w_i - w_j \in I$ for all $i, j \neq 1$.

One can see, with the above hypothesis, that we cannot have more than two simples modules in $\tp M$
(similarly for $\soc M$). Let $O$ and $y$ be the vertices corresponding with the simples modules in $\tp M$
and $\infty$ and $x$ be the ones corresponding with those of $\soc M$. Remark that $0$ can be equal to $y$ and so $\infty$ to $x$ and that $x$ and $y$ must be vertices of the path $w_1$.

Consider $e_w = \sum e_i$ where the sum is taken over the vertices of the path $w_1$ and $e = 1 - e_w + e_0 + e_\infty$. Thus the module $N = eMe$ is also an indecomposable module and one can construct an isomorphism between $N$ and $D_{0\infty}$. Moreover, $e_w M e_w$ is a direct sum of two modules and we can easily construct an isomorphism between $e_w M e_w$ and $e_w D_{xy} e_w$. Using those two morphism, one obtain the desired isomorphism.
\end{dem}

We will see that if an indecomposable module have two simples in is socle including $\infty$
(respectively, in is top including $0$), then it doesn't vanish in $0$ nor in $\infty$, and by the above lemma is isomorph to one of the $D_{xy}$.

\begin{lem}\label{lem:lem2}
Let $A$ be a toupie non simply connected algebra such that
$m=\dim_k e_0 A e_\infty = 1$ and $w_1$ is the unique branch in $I$.
Let $M \in \ind A$.

If $M_\infty \neq 0$ and there exist $x\neq \infty$ such that $x$ correspond to a simples modules in $\soc M$ then $M_0 \neq 0$ and $x$ is a vertex of the path $w_1$.

Similarly, if $M_0 \neq 0$ and there exist $y\neq 0$ such that $y$ correspond to a simples modules in $\tp M$ then $M_\infty \neq 0$ and $y$ is a vertex of the path $w_1$.
\end{lem}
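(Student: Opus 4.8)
The plan is to prove the first assertion and deduce the second by the duality $A \mapsto A^{\mathrm{op}}$, which interchanges the source $0$ and the sink $\infty$, interchanges $\tp$ and $\soc$, and preserves all the hypotheses (the value $m$ is symmetric and $w_1$ remains the unique branch in the ideal). So I assume $M_\infty \neq 0$ and that some $x \neq \infty$ contributes a summand $S_x$ to $\soc M$; writing $\gamma$ for the unique arrow starting at $x$, this says precisely that $\ker M_\gamma \neq 0$. The whole statement reduces to showing $M_0 \neq 0$: once that is known, $0$ and $\infty$ both lie in the support of $M$, so Lemma \ref{lem:lem1} gives $M \cong D_{ab}$ for some $a,b$ on $w_1$, and since $D_{ab}$ is $k$ with identity maps along every branch $w_j$ with $j \neq 1$, it has no intermediate socle vertex off $w_1$; hence the only non-$\infty$ simple in $\soc D_{ab}$ sits on $w_1$, forcing $x$ to be a vertex of $w_1$.

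The heart of the argument is therefore the claim $M_0 \neq 0$, and here I would reuse the block-decomposition technique of Lemma \ref{lem:zero} essentially verbatim. Suppose for contradiction that $M_0 = 0$. Let $w_i$ be the branch carrying $x$ and write its initial segment from the top vertex $i$ down to $x$ as $i = v_1 \to v_2 \to \cdots \to v_p = x$, with $\beta_r \colon v_r \to v_{r+1}$. Starting from $Y_p := \ker M_\gamma \neq 0$ together with a complement $X_p$ so that $M_x = X_p \oplus Y_p$, I would climb toward the source by setting $Y_{r-1} := M_{\beta_{r-1}}^{-1}(Y_r)$ and choosing complements $X_{r-1}$, exactly as in Lemma \ref{lem:zero}, so that each $M_{\beta_r}$ becomes block diagonal $\mathrm{diag}(g_r,h_r)$ with $h_r \colon Y_r \to Y_{r+1}$.

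The payoff is that $\bigoplus_{r=1}^{p} Y_r$ is a subrepresentation of $M$: it is closed under the internal arrows $\beta_r$ by construction, it is annihilated by $\gamma$ since $Y_p = \ker M_\gamma$, and the only arrow entering the segment is $\alpha_i \colon 0 \to i$, which vanishes because $M_0 = 0$. Dually, the complements $X_r$ together with $M$ at all remaining vertices form a complementary subrepresentation, so $M$ splits as $\bigl(\bigoplus_r Y_r\bigr) \oplus (\text{rest})$. This summand is non-zero (it contains $Y_p$) and avoids $\infty$, whereas $M_\infty \neq 0$; this contradicts the indecomposability of $M$, and hence $M_0 \neq 0$.

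The main obstacle is exactly this splitting step: one must check that climbing the branch terminates in an honest \emph{direct summand}, not merely a submodule. The favourable feature of a toupie is that the segment $\{v_1,\dots,v_p\}$ has a single entry (the arrow $\alpha_i$ from $0$) and a single exit (the arrow $\gamma$ out of $x$), and $M_0 = 0$ is precisely what kills the entry map and anchors the decomposition. Once the blocks are arranged as in Lemma \ref{lem:zero}, verifying that both the $Y$-part and its complement are closed under \emph{all} arrows of $Q$ is routine; crucially, any relations living on $w_1$ cause no difficulty, because the block technique never requires the maps $M_{\beta_r}$ to be injective.
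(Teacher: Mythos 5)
Your argument is correct and takes essentially the same route as the paper: the paper also derives the contradiction from the assumption $M_0=0$, viewing $M$ as a module over $B=(1-e_0)A(1-e_0)$ and using that $B$ is a tree algebra with unique sink $\infty$ to force $\soc M$ to be concentrated at $\infty$ --- a fact the paper itself obtains by precisely the splitting method of Lemma \ref{lem:zero} that you carry out inline along the branch through $x$ (your explicit treatment of the dual statement via $A^{\mathrm{op}}$ and of the placement of $x$ on $w_1$ via Lemma \ref{lem:lem1} only spells out what the paper leaves implicit). One small tightening, which applies equally to the paper's own Lemma \ref{lem:zero}: if the complements $X_r$ are chosen arbitrarily while climbing toward the source, the maps $M_{\beta_r}$ are a priori only block triangular, with a possibly non-zero component $X_r\to Y_{r+1}$; to get the genuine direct-sum decomposition, first build all the $Y_r$ by preimages and then choose the $X_r$ in the arrow direction with $X_{r+1}\supseteq M_{\beta_r}(X_r)$, which is possible because $M_{\beta_r}(X_r)\cap Y_{r+1}=0$.
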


\begin{dem}
If $M_0 = 0$ then $M$ is a $B$-module where $B= (1 - e_0)A(1 - e_0)$. Since $M$ is indecomposable and $M_\infty \neq 0$ and $B$ is a tree algebra with only one sink $\infty$, we have that $x$ must be $\infty$, a contradiction.
\end{dem}

\begin{Theorem}\label{teo:Laura}
Let $A$ be a toupie non simply connected algebra such that
$m=\dim_k e_0 A e_\infty = 1$.

If $w_1$ is the unique branch in $I$, then $A$ is a Laura algebra.
\end{Theorem}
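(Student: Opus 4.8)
The plan is to establish that $A$ is laura by proving that the class $\ind A \setminus (\LUR)$ is finite, which is the defining property of a laura algebra. Concretely, I would prove the claim announced before Lemma~\ref{lem:lem1}: every indecomposable $M \in \ind A$ that does not belong to $\LUR$ is either isomorphic to one of the modules $D_{xy}$ with $x,y$ in the support of $w_1$, or is supported on $w_1$. Both families are manifestly finite: the first because there are only finitely many ordered pairs of vertices on the finite path $w_1$, and the second because the convex subcategory supported on $w_1$ is given by a linear ($\mathbb{A}$-type) quiver with zero relations, hence is of finite representation type. Granting this containment, $\ind A \setminus (\LUR)$ is finite and $A$ is laura.

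I would organise the proof of the containment according to whether $0$ and $\infty$ lie in the support of $M$. If both do, Lemma~\ref{lem:lem1} yields directly $M \cong D_{xy}$ with $x,y$ on $w_1$. Otherwise $M_0 = 0$ or $M_\infty = 0$; and since the hypotheses $m=1$ and ``$w_1$ is the unique branch in $I$'' are preserved under $A \leftrightarrow A^{\mathrm{op}}$ --- an involution interchanging $0$ with $\infty$, the algebras $B=(1-e_0)A(1-e_0)$ and $C=(1-e_\infty)A(1-e_\infty)$, projective with injective dimension, and $\mathcal{L}_A$ with $\mathcal{R}_A$ --- it is enough to treat the case $M_\infty = 0$. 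In this case the objective becomes: if $M_\infty = 0$ and $M$ is not supported on $w_1$, then $M \in \mathcal{L}_A$.

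For this core case I would show that every predecessor $X$ of $M$ satisfies $\pd X \le 1$, which is exactly membership in $\mathcal{L}_A$. Writing $A = B[\rad P_0]$ as a one-point extension, a syzygy computation identifies the indecomposables of projective dimension at least two: each of them either is supported on $w_1$ or has $\infty$ in its support, the prototype being $S_0 = I_0$, whose first syzygy $\rad P_0$ fails to be projective precisely because the socles of the branches $w_2,\dots,w_t$ are glued at $\infty$ (this gluing, forced by $m=1$, being the sole source of projective dimension two). I would then combine this description with the socle and top restrictions furnished by Lemma~\ref{lem:lem2} to show that no module of projective dimension at least two admits a path to an $M$ with $M_\infty=0$ lying off $w_1$: any such path would either place $\infty$ in the support of $M$ or force $M$ onto $w_1$, contrary to hypothesis. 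Hence all predecessors of $M$ have projective dimension at most one, so $M \in \mathcal{L}_A$.

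The main obstacle is precisely this last step, the exclusion of predecessors of projective dimension two. The subtlety is that paths in $\md A$ need not respect supports, so the argument cannot be read off the quiver alone; it must weld together the explicit syzygy description of the high projective dimension modules with the rigid control on tops and socles coming from Lemmas~\ref{lem:lem1} and~\ref{lem:lem2}, using the directedness of $B$ and $C$ to see that the offending modules sit on the side of the module category opposite to $M$. Once the containment of $\ind A \setminus (\LUR)$ in the union of the family of modules $D_{xy}$ and the indecomposables supported on $w_1$ is in place, finiteness of both families gives that $A$ is laura; combined with the earlier lemma showing that $A$ is not weakly shod, this pins down the position of $A$ in the hierarchy.
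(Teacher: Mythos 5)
Your target containment --- $\ind A \setminus (\LUR) \subseteq \{D_{xy}\} \cup \ind W$ --- is the same finite set the paper aims at, but the intermediate statement you reduce everything to is false, and it fails exactly at the step you flag as the main obstacle. Concretely, your claim ``if $M_\infty = 0$ and $M$ is not supported on $w_1$, then $M \in \mathcal{L}_A$'' has a counterexample. Take $t=3$, $w_1 = 0 \to a \to b \to \infty$, $w_2 = 0 \to c \to \infty$, $w_3 = 0 \to d \to \infty$, with $I$ generated by the path $0 \to a \to b$ and by $w_2 - w_3$; then $m=1$, $A$ is not simply connected, and $w_1$ is the unique branch in $I$. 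Here $\tp I_\infty = S_0 \oplus S_b$, the projective cover is $P_0 \oplus P_b \to I_\infty$, and one computes $\Omega^1 I_\infty \cong S_a \oplus P_\infty$ with $S_a$ non-projective (since $\rad P_a \cong P_b \neq 0$), so $\pd I_\infty = 2$. Now let $F$ be the indecomposable quotient of $I_\infty$ by its submodule supported on $\{b,\infty\}$, so that $F$ has support $\{0,c,d\}$ with all maps the identity. Then $F_\infty = 0$ and the support of $F$ is not contained in $w_1$, yet the quotient map $I_\infty \to F$ is a one-step path from a module of projective dimension two to $F$; hence $F \notin \mathcal{L}_A$. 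This also refutes your proposed exclusion mechanism (``any such path would either place $\infty$ in the support of $M$ or force $M$ onto $w_1$''): passing to a quotient simply deletes $\infty$ from the support. A further problem is your appeal to ``the directedness of $B$ and $C$'': with $m=1$ the algebra $B = (1-e_0)A(1-e_0)$ is essentially a hereditary star with arms of arbitrary length glued at the sink $\infty$, which is in general of infinite representation type and not directed --- that kind of argument was available in the paper only in the $m=0$ case of Subsection 3.1.

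The paper's proof deliberately avoids deciding whether any given module lies in $\mathcal{L}_A$ or in $\mathcal{R}_A$. It sets $K_R = \{M \in \ind A : M_0 \neq 0, \text{ or } M \in \ind W \text{ vanishing at } \infty\}$ and dually $K_L$ --- note the split is keyed to $M_0 \neq 0$ versus $M_\infty \neq 0$, not to the vanishing condition you chose; your module $F$ has $F_0 \neq 0$ and lies in $K_R$, consistently on the $\mathcal{R}_A$ side. It then proves, via Lemmas \ref{lem:lem1} and \ref{lem:lem2}, that $K_R$ is closed under successors and $K_L$ under predecessors, and invokes the theorem of \cite{ACLST} that every $X \notin \LUR$ sits on a path $I_\infty \rightsquigarrow X \rightsquigarrow P_0$; since $I_\infty \in K_R$ and $P_0 \in K_L$, every such $X$ lies in $K_R \cap K_L \subseteq \ind W \cup \{D_{xy}\}$, which is finite. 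To salvage a direct approach like yours, you would need closure statements of this successor/predecessor type for classes keyed to $M_0 \neq 0$ and $M_\infty \neq 0$, rather than a support-based classification of the modules of projective dimension at least two together with a claim that they admit no paths to the modules in question.
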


\begin{dem}
Consider $e = \sum_{x \in w_1} e_x$ and $W=eAe$.

Let $K_{R} =\{M \in \ind A \text{ such that } M_0 \neq 0 \text{ or } M \in \ind W \text{ which vanish in } \infty \}$
and $K_{L} =\{M \in \ind A \text{ such that } M_\infty \neq 0 \text{ or } M \in \ind W \text{ which vanish in } 0 \}$.
Observe that $P_0$ and $I_\infty$ belongs to $K_R \cap K_L$. We will show that $K_R$ is closed under successors and $K_L$ is closed under predecessors. Since for all indecomposables modules $X$ which not lies in $\LUR$, we have a path $I_\infty \rightsquigarrow X \rightsquigarrow P_0$, see \cite{ACLST}, we conclude that $\ind A - (\LUR) \subseteq K_R \cap K_L$. Since the proof is dual, we will proved that $K_R$ is closed under successors.

Let $f: L \to M$ be a non-zero morphism between indecomposables modules. Suppose that $L \in K_R$.
First, let suppose that $M_\infty \neq 0$ and $f_\infty \neq 0$. Therefore, $L_\infty \neq 0$ and thus $L_0 \neq 0$.
By lemma \ref{lem:lem1}, we have that $L \cong D_{xy}$ for some $x, y$ in $w_1$. Assuming, $L=D_{xy}$, we obtain that $M(w_2) \circ f_0 = f_\infty L(w_2) = f_\infty \neq 0$ and so $f_0 \neq 0$. We conclude that $M_0 \neq 0$ and therefore that $M \in K_R$.

In the case, where $M_\infty \neq 0$ and $f_\infty = 0$, there exist $x \in \soc M$ such that $L_x \neq 0$ and by the lemma \ref{lem:lem2}, we have that $M_0 \neq 0$.

Thus, we can suppose that $M_0 = 0$ and $M_\infty=0$. Therefore, there exist $y \in \to L$ such that $M_y \neq 0$, witch means that $y \neq 0$. If $L_0 \neq 0$, by lemma \ref{lem:lem2}, we have that $L_\infty \neq 0$ and that $y$ is  a vertex of $w_1$. If $L \in \ind W$, then clearly $y$ belongs to $w_1$. Since $M$ vanishes in $0$ and $\infty$ but not in $y$ a vertex of $w_1$, we have that $M \in \ind W$ and thus $M \in K_R$.

We conclude that $\ind A - (\LUR) \subseteq K_R \cup K_L$ and by lemma \ref{lem:lem1}, we have that $K_R \cup K_L \subseteq \ind W \cup \{D_{xy} \text{ such that } x, y \in W_0\}$ which is finite. Therefore, $A$ is a laura algebra.
\end{dem}

\section{Simply connected case.}

Let $A=kQ/I$ be a toupie simply connected algebra with $m=\dim_k e_0
A e_\infty$. Let $t$ be the number of branches of $Q$.

Observe that, since $t\neq 1$ and $A$ is simply connected, then $t
> m > 0$.

\subsection{Dimension one and two}

Throughout this subsection, $A=kQ/I$ is a toupie simply connected
algebra with $m=\dim_k e_0 A e_\infty \leq 2$.

We will see that $A$ is tilted when $m=1$ and that it is
quasi-tilted (in fact canonical) or not Laura when $m=2$.

\begin{lem}
Let $A$ be a toupie simply connected algebra with $m=\dim_k e_0 A
e_\infty = 1$. Then $A$ is tilted.
\end{lem}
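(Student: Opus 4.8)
The plan is to realize $A$ as a one-point extension of a hereditary algebra by an injective module lying on a complete slice, exactly as in the proof of the Theorem of Subsection 3.1, and then to invoke \cite{HR}.

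First I would pin down the shape of $I$. Since $A$ is simply connected, every pair of branches is joined by a minimal relation; in particular no branch can lie in $I$, for a monomial relation $w_j\in I$ would make $\{j\}$ a proper nonempty subrelation of any minimal relation whose support contains $j$, contradicting minimality. Hence $w_i\notin I$ for all $i$, and because $m=\dim_k e_0Ae_\infty=1$ the classes of the $w_i$ in $e_0Ae_\infty$ are pairwise proportional. After rescaling the arrows $\alpha_i$ I may therefore assume $w_i-w_j\in I$ for all $i,j$, and that these differences generate $I$.

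Next I set $B=(1-e_0)A(1-e_0)$ and write $A=B[\rad P_0]$ as a one-point extension at the source $0$. The quiver of $B$ is obtained from $Q$ by deleting $0$, so it is the star formed by the $t$ arms $i\rightsquigarrow\infty$ meeting at the unique sink $\infty$; since every relation $w_i-w_j$ starts at $0$, it vanishes in $eAe$ with $e=1-e_0$, so $B$ is the path algebra of this star and is hereditary. I would then identify the $B$-module $M=\rad P_0$: using $m=1$ one checks that its dimension vector is $1$ at every vertex of $B$ with all structure maps the identity, so $M$ is the sincere indecomposable module, which is precisely the injective $I_\infty$ at the sink of $B$.

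Finally, for a hereditary algebra the indecomposable injectives form a complete slice of its Auslander-Reiten quiver (the image under the standard duality of the injective cogenerator), so $M=I_\infty$ is a single module lying on one such slice. As in Subsection 3.1, the one-point extension $A=B[M]$ of the tilted (indeed hereditary) algebra $B$ by an injective module supported on one slice then admits a complete slice in its Auslander-Reiten quiver --- the injective slice of $B$ together with the new projective $P_0$ --- so $A$ is tilted by \cite{HR}. The step I expect to be most delicate is the verification that $\rad P_0\cong I_\infty$ as a $B$-module (that $m=1$ really forces dimension $1$ at $\infty$ and identity structure maps) and that adjoining $P_0$ to the injective slice of $B$ still yields a convex, sincere, complete slice over $A$; both follow the template already established for the $m=0$ case, the only new feature being that here $M$ is indecomposable rather than a decomposable injective.
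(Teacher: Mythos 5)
Your proof is correct and takes essentially the same route as the paper: you write $A=B[\rad P_0]$ with $B=(1-e_0)A(1-e_0)$ hereditary, identify $\rad P_0\cong I_\infty$ as the indecomposable injective $B$-module at the sink, and conclude by \cite{HR} that a one-point extension of a hereditary algebra by an indecomposable injective is tilted. The extra details you supply (no branch lies in $I$ by minimality, $I$ is generated by the differences $w_i-w_j$ after rescaling, and the explicit slice obtained by adjoining $P_0$ to the injective slice of $B$) merely flesh out steps the paper asserts without proof.
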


\begin{dem}
Consider $A = B[M]$ where $B=(1-e_0)A(1-e_0)$ and $M = \rad P_0$.
Since $A$ is a simply connected toupie algebra and $m=1$, we have
that $B$ is hereditary and $I_\infty \cong P_0$. Thus $M$ is the
indecomposable injective $B$-module in the vertex $\infty$.
Therefore, $A$ is a one point extension of a hereditary algebra by
an indecomposable injective module. It then follows from  \cite{HR}
that $A$ is tilted.
\end{dem}

\begin{lem}
Let $A$ be a toupie simply connected algebra such that $m=$ $\dim_k
e_0 A e_\infty$ $= 2$. If $A$ is a Laura algebra then it is a
canonical algebra.
\end{lem}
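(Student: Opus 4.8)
The plan is to prove the contrapositive: a simply connected toupie algebra with $m=2$ that is \emph{not} canonical fails to be Laura. First I would record the structure forced by simple connectivity. If some branch $w_j$ lay in $I$, then no minimal relation could involve $w_j$, since any relation containing $w_j$ would have the proper subsum $\lambda_j w_j\in I$, violating minimality; so $A$ would not be simply connected, and hence no branch is in $I$. Likewise a relation supported on a single branch would be a zero subpath of that branch and would force the whole branch into $I$, which is excluded. Since in a toupie the only pair of vertices joined by more than one path is $(0,\infty)$, every relation is therefore a $k$-linear combination of the full branches $w_1,\dots ,w_t$; the classes $\bar w_1,\dots ,\bar w_t$ span the two-dimensional space $e_0Ae_\infty$, and $t\geq 3$.

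The next step is to reformulate the canonical condition in these terms: such an $A$ is canonical if and only if the vectors $\bar w_1,\dots ,\bar w_t$ are pairwise linearly independent. Indeed, if they are in general position one takes the basis $\{\bar w_1,\bar w_2\}$, writes $\bar w_i=a_i\bar w_1+b_i\bar w_2$ with $a_i,b_i\neq 0$, and rescales the arrows so that $a_i=1$, obtaining relations $w_1+\lambda_i w_2-w_i\in I$ with the $\lambda_i=b_i$ pairwise distinct and nonzero; conversely the canonical relations put the $\bar w_i$ in general position. So if $A$ is not canonical, two branches are parallel, and after rescaling $w_1-w_2\in I$. As $m=2$ there is a transversal branch $w_3$ with $\{\bar w_1,\bar w_3\}$ a basis, and applying simple connectivity to the pair $\{1,3\}$ produces a minimal relation whose third branch $w_4$ is distinct from $1,2,3$ (it cannot be $2$, since $\bar w_2=\bar w_1$), giving $w_1+\mu w_3-w_4\in I$ with $\mu\neq 0$, $t\geq 4$, and all four branches of length at least two.

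I would then reduce by the idempotent $e=e_0+e_\infty+e_1+e_2+e_3+e_4$, where $e_i$ is taken at the vertex $i$ ending $\alpha_i$ on $w_i$. Because every other branch $w_j$ satisfies $w_j\equiv a_j w_1+b_j w_3\pmod I$, it lies in $\rad^2(eAe)$ and contributes no arrow; so $eAe$ is exactly the algebra $C$ with four length-two branches giving paths $p_1,p_2,p_3,p_4$ from $0$ to $\infty$ and the two relations $p_1=p_2$ and $p_1+\mu p_3=p_4$. It then suffices to show $C$ is not Laura, for then $A$ is not Laura by \cite{AC}. The mechanism is to exhibit an infinite family of pairwise non-isomorphic indecomposable $C$-modules $N_\lambda$ with $\pd N_\lambda\geq 2$ and $\id N_\lambda\geq 2$: such modules can lie neither in $\mathcal L_C$ nor in $\mathcal R_C$, so $\ind C-(\mathcal L_C\cup\mathcal R_C)$ is infinite.

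The main obstacle is this last step, namely producing the family and verifying both homological dimensions simultaneously. The degeneracy $p_1=p_2$ is precisely the collision of two parameters of the canonical algebra of type $(2,2,2,2)$: the Tits form of $C$ is positive semidefinite of corank two, with radical spanned by $(2;1,1,1,1;0)$ and $(0;1,1,1,1;2)$, so $C$ is of tubular type yet, unlike the canonical algebra, is no longer quasi-tilted (for instance $\rad P_0$ is not projective, so $S_0$ already has projective dimension two). The witnessing modules are the regular families of intermediate slope, realized by one-parameter families of representations having $k^2$ at the source and the sink in which the composite maps $0\to\infty$ reproduce the collided parameter; the computation to be carried out is that each such $N_\lambda$ has a non-split second syzygy and, dually, a non-split second cosyzygy, exactly through explicit (starts of) resolutions of the kind used in Subsections 3.2 and 3.3. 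Carrying this out shows $C$ is not Laura, hence $A$ is not Laura, and therefore Laura forces $A$ to be canonical.
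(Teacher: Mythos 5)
Your setup is sound, and in places more careful than the paper's own text: the observation that no branch can lie in $I$ (a minimal relation cannot involve a branch that is itself in $I$), that every relation is a linear combination of full branches, and the reformulation ``$A$ canonical $\Leftrightarrow$ the classes $\bar w_1,\dots,\bar w_t$ are pairwise linearly independent in $e_0Ae_\infty$'' correctly fill in the paper's terse claim that non-canonicity yields two parallel branches with $w_1-w_2\in I$. Up to and including the choice of a mixed branch $w_4$ with $w_1+\mu w_3-w_4\in I$, the argument is essentially right (though note that the minimal relation for the pair $\{1,3\}$ need not itself have three-element support; you only get, and only need, \emph{some} branch whose class has both coordinates nonzero, which does follow from simple connectivity).

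The genuine gap is the last step, and your choice of idempotent makes it unavoidable. Your algebra $C$ (four length-two branches, relations $p_1-p_2$ and $p_1+\mu p_3-p_4$) is itself \emph{simply connected} in the paper's sense: all six pairs of branches are linked by minimal relations ($p_1-p_2$; $p_1+\mu p_3-p_4$; and the induced $p_2+\mu p_3-p_4$), so $C$ is exactly an instance of the statement being proved. Consequently none of the previously established results applies to it: the lemma of Subsection \ref{ssec:NONE} requires a non simply connected algebra, and Lemma \ref{lem:IFam} requires $m\geq 3$, while here $m=2$. So the infinite family $N_\lambda$ with $\pd N_\lambda\geq 2$ and $\id N_\lambda\geq 2$ would have to be constructed and verified from scratch, and your proposal explicitly defers this (``the computation to be carried out is\dots''); the Tits-form and tubular-slope remarks are heuristics, not a proof, since $C$ is not quasi-tilted and the canonical-algebra module theory cannot simply be cited for it. The paper sidesteps all of this by cutting further: it takes $e=e_0+e_1+e_2+e_\infty$ only. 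Then $e_0(\rad (eAe))^2e_\infty$ is the one-dimensional span of $\bar w_1=\bar w_2$, so the entire complementary direction of the two-dimensional space $e_0Ae_\infty$ collapses to a \emph{single arrow} $0\to\infty$, i.e.\ a length-one branch; since an admissible ideal lies in $\rad^2 kQ'$, a length-one branch can occur in no relation, so $eAe$ is automatically not simply connected, has no branch in its ideal $(\alpha\gamma-\beta\delta)$, and is not hereditary --- whence the lemma of Subsection \ref{ssec:NONE} applies verbatim and yields that $eAe$, and by \cite{AC} also $A$, is not Laura. If you keep your reduction to $C$, you must supply the explicit one-parameter family and both resolutions; if you shrink the idempotent as above, the missing computation is exactly the one already done in the non simply connected case.
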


\begin{dem}
Suppose that $A$ is not a canonical algebra. Since $m=2$ and $t>m$
there are at least two branches that are linearly dependant, say
$w_1$ and $w_2$. Thus there exists $\lambda \in k^*$ such that $w_1
- \lambda w_2 \in I$. We can assume, by a simple change of
presentation of $A$, that $w_1 - w_2 \in I$. Consider $e = e_0 + e_1
+ e_2 + e_\infty$.
Then, the algebra $eAe$ is isomorphic to $kQ'/I'$ where $I'$ is the
ideal generated by $\alpha \gamma - \beta \delta$ and the quiver
$Q'$ is the following:

$$\xymatrix@-0.7pc{%
& \bullet\save[]+<0pt,8pt>*{0}\restore \ar[ld]_\alpha \ar[d]^\beta \ar@/^2pc/[dd] \\%
\save[]+<-5pt,0pt>*{1}\restore \bullet \ar[rd]_\gamma & \bullet \save[]+<-5pt,0pt>*{2}\restore \ar[d]^\delta \\%
                  & \bullet \save[]+<0pt,-8pt>*{\infty}\restore
}$$\\

The algebra $eAe$ is not simply connected, not hereditary  and none
of its branch lies in the ideal $I'$, thus, by subsection
\ref{ssec:NONE}, we have that $eAe$ is not Laura. Therefore, it
follows from \cite{AC} that $A$ is not Laura.
\end{dem}

\subsection{Dimension greater than two}

Thoughout this subsection, $A=kQ/I$ is a toupie simply connected
algebra with $m=\dim_k e_0 A e_\infty \geq 3$.  Let $t$ be the
number of branches of $Q$.

If $t > m+1$, we show that $A$ is not a Laura algebra. In the case
$t=m+1$, if there is at most one branch of length at least three
then $A$ is tilted, otherwise, $A$ is not a Laura algebra.

We will first construct an infinite family of special modules.

\begin{lem}\label{lem:IFam}
Let $A=kQ/I$ be a toupie simply connected algebra such that
$m=\dim_k e_0 A e_\infty \geq 3$.  Consider $e = e_0 + e_\infty +
\sum\limits_{\ i\ \in\ 0^\to} e_i.$

Then there exist $\{ N_\lambda\}_{\lambda \in k^*}$ an infinite
family of non-isomorphic indecomposables $eAe$-modules with
$(N_\lambda)_0 \neq 0$ and $(N_\lambda)_\infty \neq 0$.
\end{lem}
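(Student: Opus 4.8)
The plan is to describe $eAe$ explicitly and then exhibit the family by a uniform choice of linear maps that automatically respects every induced relation. First I would record the shape of $eAe$: its quiver has vertices $0,1,\dots,t,\infty$, the arrows $\alpha_i\colon 0\to i$, and, for each branch reaching the sink, an arrow $\gamma_i\colon i\to\infty$, so that $w_i$ becomes the length-two path $\gamma_i\alpha_i$. Since $e_i(eAe)e_j=0$ for distinct intermediate vertices $i,j$ and the arrows carry no relations among themselves, the whole ideal of $eAe$ sits inside $e_0(eAe)e_\infty$; it is generated by the elements $\sum_i c_i\gamma_i\alpha_i$ coming from the minimal relations $\sum_i c_iw_i\in I$. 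Writing $\Psi\colon k^t\to e_0Ae_\infty$, $(c_i)\mapsto\sum_i c_i[w_i]$, the relation space is $R=\ker\Psi$, and since the classes $[w_i]$ span $e_0Ae_\infty$ we have $\operatorname{rank}\Psi=m\geq 3$, whence $\dim R^{\perp}=m$ with $R^{\perp}=\{(\phi[w_i])_i:\phi\in(e_0Ae_\infty)^{*}\}$.

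The key idea is to force every composite $0\to i\to\infty$ in the module to be a scalar multiple of one fixed matrix, with the scalars chosen orthogonally to $R$; this reduces the entire relation bookkeeping to a single condition. Concretely, fix a functional $\phi$ that is nonzero on every nonzero class $[w_i]$, set $d_i=\phi([w_i])$ and $S=\{i:d_i\neq 0\}$. Since at least $m$ of the classes are nonzero we have $|S|\geq m\geq 3$, and we relabel so that $1,2\in S$. For $\lambda\in k^{*}$ define $N_\lambda$ by $(N_\lambda)_0=(N_\lambda)_\infty=k^2$, $(N_\lambda)_i=k^2$ for $i\in S$ and $(N_\lambda)_i=0$ otherwise, with
\[
(N_\lambda)_{\alpha_1}=\begin{pmatrix}1&1\\0&1\end{pmatrix},\qquad
(N_\lambda)_{\alpha_2}=\begin{pmatrix}1&0\\ \lambda&1\end{pmatrix},\qquad
(N_\lambda)_{\alpha_i}=I \text{ for the remaining } i\in S,
\]
and $(N_\lambda)_{\gamma_i}=d_i\,(N_\lambda)_{\alpha_i}^{-1}$ for all $i\in S$ (each $(N_\lambda)_{\alpha_i}$ being unipotent or the identity, hence invertible). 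By construction every composite equals $(N_\lambda)_{\gamma_i}(N_\lambda)_{\alpha_i}=d_iI$. Then for any relation $(c_i)\in R$ the induced map on $N_\lambda$ is $\sum_{i\in S}c_id_i\,I=\langle c,d\rangle\,I=0$, because $d=(d_i)\in R^{\perp}$; hence each $N_\lambda$ is a genuine $eAe$-module, and plainly $(N_\lambda)_0\neq 0$ and $(N_\lambda)_\infty\neq 0$.

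It remains to see that the $N_\lambda$ are indecomposable and pairwise non-isomorphic, for which I would run the standard Kronecker-type argument. An endomorphism is given at $0$ by a matrix commuting with each $(N_\lambda)_{\alpha_i}(N_\lambda)_{\alpha_1}^{-1}$; the unipotent blocks attached to branches $1$ and $2$ share no common invariant line when $\lambda\neq 0$, which forces the component at $0$, and then at every vertex, to be scalar plus nilpotent, so $\operatorname{End}N_\lambda$ is local and $N_\lambda$ is indecomposable. For the isomorphism classes, the conjugacy invariant of $(N_\lambda)_{\alpha_2}(N_\lambda)_{\alpha_1}^{-1}$, whose trace is $2-\lambda$, is preserved by any isomorphism, so $N_\lambda\cong N_{\lambda'}$ forces $\lambda=\lambda'$; this yields the required infinite family $\{N_\lambda\}_{\lambda\in k^{*}}$.

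The main obstacle is exactly the relation bookkeeping: a simply connected toupie with $m\geq 3$ may carry many minimal relations linking all of the branches together, so one cannot support the module on a handful of branches and ignore the rest without violating some induced relation. The device that removes this obstacle is to make every surviving composite equal $d_iI$ for a single matrix and to transfer the whole constraint onto the one condition $d\in R^{\perp}$, which is solvable precisely because $\dim R^{\perp}=m\geq 3>0$; once this is arranged, the indecomposability and non-isomorphism computations are routine.
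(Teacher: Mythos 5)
Your description of $eAe$ and the relation bookkeeping are correct (indeed more careful than the paper's, which only verifies the one chosen relation): the ideal of $eAe$ does live in $e_0(eAe)e_\infty$, and making every branch composite equal to $d_iI$ with $d=(\phi[w_i])_i\in R^{\perp}$ does ensure all induced relations hold. The fatal gap is indecomposability: your modules are decomposable, and in fact all isomorphic to each other. Each intermediate vertex $i$ is the target of the single arrow $\alpha_i$ and the source of the single arrow $\gamma_i$, so one may change basis at each such vertex independently of the others. Taking $\psi_0=\psi_\infty=\mathrm{id}$ and $\psi_i=(N_\lambda)_{\alpha_i}^{-1}$, your defining identity $(N_\lambda)_{\gamma_i}=d_i(N_\lambda)_{\alpha_i}^{-1}$ says precisely that this is an isomorphism from $N_\lambda$ to the representation with all $\alpha_i\mapsto I$ and $\gamma_i\mapsto d_iI$, which is visibly $X\oplus X$ for the module $X$ that is one-dimensional at every supported vertex with $X_{\alpha_i}=1$, $X_{\gamma_i}=d_i$. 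Equivalently, your endomorphism computation is wrong: an endomorphism is a tuple $(\varphi_x)$ with $\varphi_i(N_\lambda)_{\alpha_i}=(N_\lambda)_{\alpha_i}\varphi_0$ and $\varphi_\infty(N_\lambda)_{\gamma_i}=(N_\lambda)_{\gamma_i}\varphi_i$; since $\alpha_1$ and $\alpha_2$ have \emph{different} targets, nothing forces $\varphi_0$ to commute with $(N_\lambda)_{\alpha_2}(N_\lambda)_{\alpha_1}^{-1}$ --- each branch only yields $\varphi_i=(N_\lambda)_{\alpha_i}\varphi_0(N_\lambda)_{\alpha_i}^{-1}$ and $\varphi_\infty=\varphi_0$, so $\mathrm{End}\,N_\lambda\cong M_2(k)$ and the ``trace $2-\lambda$'' of $(N_\lambda)_{\alpha_2}(N_\lambda)_{\alpha_1}^{-1}$ is not an invariant of the representation. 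A genuine Kronecker argument needs two arrows sharing both endpoints, which a toupie never provides.

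Moreover, the defect is structural, not a repairable oversight: whenever all intermediate maps are invertible and all composites are scalar multiples of one fixed matrix, the basis changes above reduce the representation to one pulled back from the two-vertex quiver $0\to\infty$, which can never be indecomposable with $(N_\lambda)_0=k^2$. The paper's construction evades exactly this collapse: it puts nonzero \emph{scalar} composites ($I$ and $(1/\lambda_2)I$) on only two branches tied by a relation, and on two further branches places rank-one, non-invertible maps with zero composite, namely $\begin{pmatrix}1&1\\0&0\end{pmatrix}$ followed by $\begin{pmatrix}0&1\\0&0\end{pmatrix}$, and $\begin{pmatrix}1&\lambda\\0&0\end{pmatrix}$ followed by $\begin{pmatrix}0&0\\0&1\end{pmatrix}$. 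The $\lambda$-dependent kernels and images of these singular maps are what pin the endomorphisms down (one checks they force $\varphi_0$ scalar, so $\mathrm{End}$ is local) and distinguish the isomorphism classes. Your $d\in R^{\perp}$ device remains valuable --- it is what an honest verification that \emph{all} relations of $eAe$ annihilate the module requires, a point the paper glosses over --- but the family itself must be rebuilt around non-invertible intermediate maps in the paper's style rather than around uniformly invertible ones.
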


\begin{dem}
Since $A$ is simply connected and $m\geq 3$, we have that $t\geq
4$. Moreover, we can assume that there exists a relation
$\sum\limits_{i=1}^t \lambda_i w_i \in I$ such that $\lambda_1
\neq 0$ and $\lambda_2 \neq 0$. Observe that this relation is not
necessarily a minimal relation. We can easily assume that
$\lambda_1=-1$.

For $\lambda \in k^*$ we define the module $N_\lambda$ as follows:

$$\xymatrix{%
& && k^2 \ar[llldd]_1 \ar[lldd]^{1/\lambda_2} \ar[rdd]_{(^1_0 \ ^1_0)} \ar[rrdd]^{(^1_0 \ ^\lambda_0)} \ar[rrrrdd]&&&&\\%
&&&&&&&\\%
k^2 \ar[rrrdd]_1 & k^2 \ar[rrdd]^1 && & k^2 \ar[ldd]_{(^0_0\ ^1_0)} & k^2 \ar[lldd]^{(^0_0 \ ^0_1)} &\ldots& 0 \ar[lllldd] \\%
&&&&&&&\\%
&&& k^2&&&& }$$\\

It is easy to see that $N_\lambda$ is an indecomposable $eAe$-module
and that $N_{\lambda_1} \cong N_{\lambda_2}$ if and only if
$\lambda_1=\lambda_2$.
\end{dem}

We will now show that the modules $N_\lambda$ lie neither in
$\mathcal L_A$ nor in $\mathcal R_A$, and thus, the algebra is not
Laura.

\begin{lem}
Let $A=kQ/I$ be a toupie simply connected algebra with $m=\dim_k e_0
A e_\infty \geq 3$.  If $t$, the number of branches of $Q$, is
greater than $m+1$, then $A$ is not Laura.
\end{lem}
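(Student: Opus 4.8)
The plan is to reduce the problem, through the idempotent-restriction theorem of Assem and Coelho \cite{AC}, to the algebra $eAe$ furnished by Lemma \ref{lem:IFam}, where $e = e_0 + e_\infty + \sum_{i\in 0^\to} e_i$. Recall that an algebra is Laura precisely when only finitely many indecomposables fall outside $\mathcal L \cup \mathcal R$. Lemma \ref{lem:IFam} already provides an infinite family $\{N_\lambda\}_{\lambda\in k^*}$ of pairwise non-isomorphic indecomposable $eAe$-modules with $(N_\lambda)_0\neq 0$ and $(N_\lambda)_\infty\neq 0$; hence it suffices to show that no $N_\lambda$ belongs to $\mathcal L_{eAe}\cup\mathcal R_{eAe}$. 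By the remark in Section 2, a module of projective dimension greater than one cannot lie in $\mathcal L_{eAe}$, and a module of injective dimension greater than one cannot lie in $\mathcal R_{eAe}$, so the whole lemma comes down to proving that $\pd N_\lambda \geq 2$ and $\id N_\lambda \geq 2$ for every $\lambda\in k^*$.

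To obtain $\pd N_\lambda\geq 2$ I would compute the beginning of a minimal projective resolution, exactly in the style of the ``not Laura'' arguments of Subsections 3.2 and 3.3. Reading the top of $N_\lambda$ off its representation, the projective cover is $P_0^2$ together with one indecomposable projective for each branch vertex at which the map out of $0$ has rank one; I would then identify the first syzygy $\Omega N_\lambda$ and the projectives covering it, obtaining a resolution of the shape $P_\infty^{\,k} \to P' \to P_0^2\oplus P'' \to N_\lambda$ with $P',P''$ sums of indecomposable projectives. The goal is to see that the exponent $k$ is strictly positive, so that $\Omega N_\lambda$ is not projective and $\pd N_\lambda\geq 2$. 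Because the toupie is symmetric under exchanging $0$ and $\infty$ and the family $N_\lambda$ is built symmetrically, the inequality $\id N_\lambda\geq 2$ follows by the dual computation on an injective copresentation.

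The main obstacle is pinning down that exponent $k$ and seeing that it is where the hypothesis $t>m+1$ is used. The $t$ branches span the $m$-dimensional space $e_0Ae_\infty$, so there are $t-m$ independent linear relations among the $w_i$, and $t>m+1$ is exactly the condition $t-m\geq 2$ guaranteeing at least two of them; this surplus of relations is what forces a copy of $P_\infty$ (dually $P_0$) to persist into homological degree two and keeps $\Omega N_\lambda$ non-projective, paralleling the inequality $k=2(r-m+s)-1\geq 1$ of Subsection 3.2. It is precisely this estimate that collapses in the borderline case $t=m+1$, where with at most one long branch the algebra is instead tilted. Granting $\pd N_\lambda\geq 2$ and $\id N_\lambda\geq 2$, the infinitely many $N_\lambda$ avoid $\mathcal L_{eAe}\cup\mathcal R_{eAe}$, so $eAe$ is not Laura, and by \cite{AC} neither is $A$.
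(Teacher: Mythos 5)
There is a genuine gap, and it sits exactly where you put all the weight: the claim that every $N_\lambda$ from Lemma \ref{lem:IFam} satisfies $\pd N_\lambda\geq 2$ and $\id N_\lambda\geq 2$ is false in general, and the paper never asserts it. Concretely, take $t=5$, all branches of length two, and $I=\langle \rho_1,\rho_2\rangle$ with $\rho_1=-w_1+w_2+w_3+w_4+w_5$ and $\rho_2=w_3+2w_4+3w_5$; one checks every pair of branches occurs in a minimal relation, so this toupie algebra is simply connected with $m=3$ and $t=5>m+1$, and here $eAe=A$. Building $N_\lambda$ as in Lemma \ref{lem:IFam} (with $\lambda_2=1$), its top is $S_0^2\oplus S_3\oplus S_4$, its projective cover is $P_0^2\oplus P_3\oplus P_4$, and a direct computation of the kernel shows that for every $\lambda\neq 1$ the first syzygy is $\Omega^1 N_\lambda\cong P_3\oplus P_4\oplus P_5^2\oplus P_\infty^2$: the four syzygy generators at the branch vertices $3,4,5,5$ have linearly independent images at $\infty$ precisely when $\lambda\neq 1$, and the two leftover socle vectors contribute the $P_\infty^2$. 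Hence $\pd N_\lambda=1$ for all but one value of $\lambda$; the positive exponent $k$ on a degree-two $P_\infty$ that you hoped to extract, by analogy with the computation $k=2(r-m+s)-1\geq 1$ of Subsection 3.2, simply does not materialize here. There is also a conceptual obstruction to any repair along these lines: Lemma \ref{lem:IFam} uses only $m\geq 3$, so the same family exists when $t=m+1$ with all branches of length two, where the paper proves $A$ is \emph{tilted}; tilted algebras are shod (every indecomposable has $\pd\leq 1$ or $\id\leq 1$, cf. \cite{CL}), so the two inequalities cannot hold simultaneously on the strength of the family's shape alone. Your guess about where $t>m+1$ enters is therefore misplaced: it plays no role in the structure of the $N_\lambda$.

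What the paper does instead, after the same reduction to $eAe$ via \cite{AC}, is to show that $P_0\notin\mathcal{L}_{eAe}$: it computes the projective resolution $0\to P_\infty^{(t-m)}\to\bigoplus_{i=1}^t P_i\to M\to 0$ of $M=\rad P_0$, applies $\DHom(-,eAe)$ to get the dimension vector of $\DTr M$, and shows by a counting argument that $\Omega^1(\DTr M)$ is not projective, so $\pd\DTr M\geq 2$. This is where $t>m+1$ is genuinely used, through $r=m(t-m)-t+1=(m-1)(t-m-1)>0$ and the final strict inequality. That gives a non-trivial $IP$-path $I_\infty\rightsquigarrow P_0$, and since $(N_\lambda)_0\neq 0\neq(N_\lambda)_\infty$ there are non-zero maps $P_0\to N_\lambda\to I_\infty$, placing every $N_\lambda$ on the cycle $I_\infty\rightsquigarrow P_0\to N_\lambda\to I_\infty$; by \cite{ACLST} no such module lies in $\mathcal{L}\cup\mathcal{R}$, so $eAe$, and then $A$, is not Laura. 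Your outer scaffolding (restriction to $eAe$, infinitude of the family, the citation of \cite{AC}) agrees with the paper, but the homological core must be replaced by this $\tau$-computation producing the $IP$-path: it is membership of $N_\lambda$ on a cycle through that path, not the projective or injective dimension of $N_\lambda$ itself, that excludes it from $\mathcal{L}\cup\mathcal{R}$.
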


\begin{dem}
Consider, as above, the algebra $eAe$ where $e = e_0 + e_\infty +
\sum\limits_{\ i\ \in\ 0^\to} e_i$ and $M=\rad _{eAe} P_0$. The
quiver of $eAe$ has the following shape:

$$\xymatrix@-0.7pc{%
                  && \bullet\save[]+<0pt,8pt>*{0}\restore \ar[lld] \ar[ld] \ar[rd] & \\%
\save[]+<-5pt,0pt>*{1}\restore \bullet \ar[rrd] & \save[]+<-5pt,0pt>*{2}\restore \bullet \ar[rd] & \ldots & \save[]+<6pt,0pt>*{t}\restore \bullet \ar[ld] \\%
                  && \bullet \save[]+<0pt,-8pt>*{\infty}\restore &
}$$

Our objective is to show that $P_{0{eAe}}$ is not in $L_{eAe}$ and
thus create an $IP$-path. To do this, we will show that the
projective dimension of $\DTr M$ is greater or equal to two. In
order to compute $\DTr M$ in $eAe$, we consider the projective
resolution of $M$.

$$ 0 \to P_\infty ^{(t-m)} \to \bigoplus\limits_{i=1}^t P_i \to M
\to 0 $$

Applying $\DHom ( _ , eAe)$ to this sequence, we get:

$$ 0 \to \DTr M \to I_\infty ^{(t-m)} \to \bigoplus\limits_{i=1}^t
I_i \to \DHom (M, eAe) \to 0$$

Since $M$ is the radical of the projective module of the source $0$
and $(1-e_0)(eAe)(1-e_0)$ is a hereditary algebra, we have that
$\DHom (M, eAe) \cong S_0$, the simple module in $0$.

We thus get that the dimension vector of $\DTr M$ is the
following:

$$\xymatrix@-0.5pc{%
                  && m(t-m) - t + 1 \ar[lld] \ar[ld] \ar[rd] & \\%
t-m-1 \ar[rrd] & t-m-1 \ar[rd] & \ldots & t-m-1 \ar[ld] \\%
                  && t-m  &
}$$

Consider $r = m(t-m) - t + 1 = (m-1)(t-m-1) > 0$. Since, $eAe$ has
the relations induced by $A$, it is also simply connected and we
have the following sequence:

$$ 0 \to \Omega ^1(\DTr M) \to P_0 ^{r} \to \DTr M \to 0$$

The dimension vector of $\Omega ^1(\DTr M)$ is
$$\xymatrix@-0.5pc{%
                  && 0 \ar[lld] \ar[ld] \ar[rd] & \\%
r-t+m+1 \ar[rrd] & r-t+m+1 \ar[rd] & \ldots & r-t+m+1 \ar[ld] \\%
                  && rm - t + m  &
}$$

To show that $\Omega ^1(\DTr M)$ is not a projective module, it is
enough to see that $t(r-t+m+1) > rm - t + m$.

We have that %
$$\begin{array}{rcl}
  t(r-t+m+1) &=& ((t-m-1)+(m+1))(r-t+m+1)\\ %
   &=& (t-m-1)(r-t+m+1) + (m+1)(r-t+m+1)\\ %
   &=& rm - t + m - mt + m^2 + m + r + 1 + (t-m-1)(r-t+m+1)\\
\end{array}$$

Moreover,%
$$\begin{array}{rl}%
  & - mt + m^2 + m + r + 1 + (t-m-1)(r-t+m+1)\\%
   &\quad \quad = - mt + m^2 + m + mt - m^2 - t + 1 + 1 + (t-m-1)(r-t+m+1)\\%
   &\quad \quad = -(t - m - 1) + (t-m-1)(r-t+m+1) + 1\\%
   &\quad \quad = (t-(m+1))(r-t+m) +1\\%
   &\quad \quad > 0%
\end{array}$$

Therefore, $\Omega ^1(\DTr M)$ is not a projective module and $\pd
\DTr M \geq 2$. Consequently, $P_0 \notin L_{eAe}$ and we get a
non-trivial $IP$-path $I \rightsquigarrow P_0$ for some
indecomposable injective $I$. Since $\infty$ is a sink, there always
exists a path from $I_\infty$ to any indecomposable injective module
of $eAe$. Thus, we get a non-trivial $IP$-path $I_\infty
\rightsquigarrow P_0$.

On the other hand, by the above lemma, there exists an infinite
family of non-isomorphic indecomposables $eAe$-module $\{ N_\lambda
\}_{\lambda \in k^*}$ and non-zero morphisms $P_0 \to N_\lambda \to
I_\infty$. Combining these morphisms with the $IP$-path, we obtain:

$$I_\infty \rightsquigarrow P_0 \to N_\lambda \to I_\infty$$

Then none of the $N_\lambda$ are in $\mathcal L \cup \mathcal R$, see \cite{ACLST}
and thus $eAe$ is not Laura. By \cite{AC}, $A$ is not Laura.
\end{dem}

We also have the following case where $A$ is not Laura.

\begin{lem}
Let $A=kQ/I$ be a toupie simply connected algebra such that
$m=\dim_k e_0 A e_\infty \geq 3$. If $t$, the number of branches of
$Q$, is equal to $m+1$ and there are at least two branches of length
at least three, then $A$ is not Laura.
\end{lem}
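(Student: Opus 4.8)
The plan is to reduce, by a suitable idempotent, to a minimal toupie algebra on which the two long branches are visible, and then to produce an infinite family of indecomposables of projective and injective dimension at least two. First I would set $e = e_0 + e_\infty + \sum_{i \in 0^{\to}} e_i + e_{1'} + e_{2'}$, where $1'$ and $2'$ denote the vertices immediately following the two distinguished branches of length at least three; by \cite{AC} it then suffices to show that $eAe$ is not Laura. Since $A$ is simply connected with $t = m+1$, the $t$ branch paths span an $m$-dimensional subspace of $e_0 A e_\infty$, so $A$ carries essentially one minimal relation $\sum_{i=1}^{t}\mu_i w_i$ involving all $t$ branches (simple connectivity forces every pair of branches to share a minimal relation, and $t=m+1$ leaves room for exactly one such relation, whence all $\mu_i\neq 0$). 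This relation survives in $eAe$, so $eAe$ is again a simply connected toupie algebra with $\dim_k e_0(eAe)e_\infty = m$, with exactly two branches of length three and all others of length two, carrying a single spanning relation.

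Next I would build the family. Adapting the module of Lemma \ref{lem:IFam}, I would define $\{N_\lambda\}_{\lambda \in k^*}$ to be indecomposable $eAe$-modules with $(N_\lambda)_0 = (N_\lambda)_\infty = k^2$, encoding the parameter $\lambda$ exactly as in \ref{lem:IFam} (which is available since $m \geq 3$ forces $t \geq 4$), but now taken non-zero on the two vertices $1'$ and $2'$ so that the module genuinely runs along both long branches. As in \ref{lem:IFam} one checks that $N_{\lambda_1} \cong N_{\lambda_2}$ implies $\lambda_1 = \lambda_2$ and that each $N_\lambda$ is indecomposable with $(N_\lambda)_0 \neq 0 \neq (N_\lambda)_\infty$.

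The heart of the argument is to prove $\pd N_\lambda \geq 2$ and $\id N_\lambda \geq 2$. For the projective dimension I would write down the beginning of a minimal projective resolution of the form $\bigoplus P_{?} \to P_0^2 \to N_\lambda \to 0$ and identify the first syzygy $\Omega^1 N_\lambda$; the goal is to show that $\Omega^1 N_\lambda$, although it has the dimension vector of a projective module, is \emph{not} projective, because a connecting map along one of the long branches is forced to drop rank. This is precisely where the hypothesis of \emph{two} long branches enters: with a single long branch the analogous syzygy is projective and $A$ is in fact tilted (case (T)(ii) of the main theorem), whereas a second independent long branch tied to the first by the single spanning relation obstructs projectivity of $\Omega^1 N_\lambda$. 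The injective dimension is treated dually. I expect this syzygy computation to be the main obstacle, since the method of the previous lemma no longer applies: the quantity $r = (m-1)(t-m-1)$ that governed $\pd \DTr(\rad P_0)$ vanishes when $t = m+1$, making $\DTr(\rad P_0)$ projective, so one genuinely needs a new module that is sensitive to the branch lengths rather than to the mere contracted shape.

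Finally, since an $A$-module of projective dimension greater than one cannot lie in $\mathcal L_{eAe}$ and one of injective dimension greater than one cannot lie in $\mathcal R_{eAe}$, each $N_\lambda$ lies outside $\mathcal L_{eAe} \cup \mathcal R_{eAe}$. Thus $\{N_\lambda\}_{\lambda \in k^*}$ is an infinite family in $\ind(eAe) - (\mathcal L_{eAe} \cup \mathcal R_{eAe})$, so $eAe$ is not Laura, and by \cite{AC} neither is $A$.
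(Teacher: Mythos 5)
Your overall architecture matches the paper's in its outer shell: you restrict along essentially the same idempotent (source, sink, the successors of $0$, plus one extra vertex on each of the two long branches), you correctly observe that $t=m+1$ together with simple connectivity forces a single relation of full support which survives in $eAe$, and you conclude via \cite{AC}. But the step you yourself flag as ``the main obstacle'' is precisely where your proof is missing: you never establish $\pd N_\lambda \geq 2$ and $\id N_\lambda \geq 2$. Your only stated mechanism is that $\Omega^1 N_\lambda$ ``has the dimension vector of a projective module'' yet fails to be projective ``because a connecting map along one of the long branches is forced to drop rank'' --- no such rank analysis is given, and it is not routine: since by your own admission the dimension vector is consistent with projectivity, no counting argument can decide the question, and one would have to analyze the actual kernel of the cover $P_0^2\oplus\cdots\to N_\lambda$ inside the relation space. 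It is not even clear that the claim is true; notice that the paper carefully avoids asserting any bound on $\pd N_\lambda$ or $\id N_\lambda$ in the simply connected case, in contrast with Subsections 3.2 and 3.3 where such direct computations were feasible.

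The paper's proof diverges from yours exactly at this point and sidesteps your obstacle. It shows instead that $P_0\notin\mathcal L_{eAe}$, by iterating the Auslander--Reiten translate on $M=\rad P_0$: over the enlarged $eAe$ (with the two deep vertices retained) $\DTr M$ is \emph{not} projective --- it is supported on the two deep vertices and $\infty$ with projective dimension one --- so, contrary to your remark, the previous lemma's method is not dead but merely needs iteration; the paper computes $\DTr^2 M$ and then $\DTr^3 M$, whose dimension vector has $r=t^2-5t+5$ at the source, and a purely numerical comparison, $2(r-t+3)+(t-2)(r-t+4)+2(t-3)>r(t-1)$, shows that $\Omega^1(\DTr^3 M)$ cannot be projective, whence $\pd \DTr^3 M\geq 2$. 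This produces a non-trivial $IP$-path $I_\infty\rightsquigarrow P_0$, and each $N_\lambda$ from Lemma \ref{lem:IFam} is then placed on the cycle $I_\infty\rightsquigarrow P_0\to N_\lambda\to I_\infty$; by \cite{ACLST} a module on such a cycle lies outside $\mathcal L\cup\mathcal R$ \emph{regardless of its own homological dimensions}. So where your plan requires the $N_\lambda$ themselves to be homologically bad, the paper only needs them to be trapped between the endpoints of the $IP$-path. To repair your write-up, either carry out your syzygy analysis in full (and its dual), or replace that step by the $\DTr^3$ computation and the cycle argument.
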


\begin{dem}
We can suppose that $w_1$ and $w_2$ have length at most three, that
is $w_1 = 0 \to a \to b \rightsquigarrow \infty$ and $w_2 = 0 \to c
\to d \rightsquigarrow \infty$. Consider $e  = e_0 + e_\infty + e_b
+ e_c + \sum\limits_{\ i\ \in\ 0^\to} e_i$. The quiver of $eAe$ has
the following shape:

$$\xymatrix@-0.7pc{%
&& \bullet\save[]+<0pt,8pt>*{0}\restore \ar[lld] \ar[ld] \ar[d] \ar[rrd] && \\%
\save[]+<-5pt,0pt>*{1}\restore \bullet \ar[d] & \save[]+<-5pt,0pt>*{2}\restore \bullet \ar[d] & 3 \bullet \ar[dd] & \ldots & \save[]+<6pt,0pt>*{t}\restore \bullet \ar[lldd] \\%
\save[]+<-12pt,0pt>*{t+1}\restore \bullet \ar[rrd] & \save[]+<-12pt,0pt>*{t+2}\restore \bullet \ar[rd] & && \\%
&& \bullet \save[]+<0pt,-8pt>*{\infty}\restore &&
}$$

Our objective is to show that $P_0$ is not in $L_{eAe}$ and thus
create an $IP$-path. Let $M=$ rad $P_0$.  We will show that $\DTr ^3
M$ has projective dimension greater or equal to two. In order to
compute $\DTr ^3 M$ in $eAe$, we need first to compute $\DTr M$ in
$eAe$ and for this, we consider the projective resolution of $M$.
Remember that $t = m+1$.

$$ 0 \to P_\infty \to \bigoplus\limits_{i=1}^t P_i \to M \to 0 $$

Applying $\DHom ( _ , eAe)$ to this sequence, we get:

$$ 0 \to \DTr M \to I_\infty \to \bigoplus\limits_{i=1}^t I_i \to
\DHom (M, eAe) \to 0$$

Since $M$ is the radical of the projective module of the source $0$
and $(1-e_0)(eAe)(1-e_0)$ is a hereditary algebra, we have that
$\DHom (M, eAe) \cong S_0$, the simple module in $0$.

We thus get that the dimension vector of $\DTr M$ is the
following:

$$\xymatrix@-0.7pc{%
                  && 0 \ar[lld] \ar[ld] \ar[d] \ar[rrd] && \\%
0 \ar[d] & 0 \ar[d] & 0 \ar[dd] & \ldots & 0 \ar[lldd] \\%
1 \ar[rrd] & 1 \ar[rd] & && \\%
&& 1 && }$$

Since, $\DTr M$ is indecomposable, we have the following
projective resolution:

$$ 0 \to P_\infty \to P_{t+1} \oplus P_{t+2} \to \DTr M \to 0$$

In order to compute $\DTr ^2 M$, we apply $\DHom ( _ , eAe)$ to the
following sequence:

$$ 0 \to \DTr ^2 M \to I_\infty \to I_{t+1} \oplus I_{t+2} \to
\DHom (\DTr M, eAe) \to 0$$

Since $eAe$ has its relations induced by $A$, it is also simply
connected. Therefore, no pair of maps in $P_0$ arriving at vertex
$\infty$ is linearly independent and thus, $\DHom(\DTr M, P_\infty)
= 0$. Moreover,$\DTr M$ is a non projective module over
$(1-e_0)(eAe)(1-e_0)$, a hereditary algebra, thus $\DHom (\DTr M,
eAe) =0$.

We thus get that the dimension vector of $\DTr ^2 M$ is the
following:

$$\xymatrix@-0.7pc{%
                  && t-3 \ar[lld] \ar[ld] \ar[d] \ar[rrd] && \\%
0 \ar[d] & 0 \ar[d] & 1 \ar[dd] & \ldots & 1 \ar[lldd] \\%
0 \ar[rrd] & 0 \ar[rd] & && \\%
&& 1 && }$$

Recall that $t \geq 4$. Once again, we obtain the following
projective resolution:

$$ 0 \to (\bigoplus\limits_{i=1}^2 (P_i)^{t-3}) \oplus
(\bigoplus\limits_{i=3}^t (P_i)^{t-4})  \to (P_0)^{t-3} \to \DTr
^2 M \to 0$$

We finally compute $\DTr ^3 M$ by applying $\DHom ( _ , eAe)$ to the
sequence:

$$ 0 \to \DTr ^3 M \to (\bigoplus\limits_{i=1}^2 (I_i)^{t-3})
\oplus (\bigoplus\limits_{i=3}^t (I_i)^{t-4}) \to (I_0)^{t-3} \to
\DHom (\DTr ^2 M, eAe) \to 0$$

Once more, we have that $\DHom (\DTr ^2 M, eAe) =0$ and thus the
dimension vector of $\DTr ^3 M$ is the following:

$$\xymatrix@-0.7pc{%
                  && r \ar[lld] \ar[ld] \ar[d] \ar[rrd] && \\%
t-3 \ar[d] & t-3 \ar[d] & t-4 \ar[dd] & \ldots & t-4 \ar[lldd] \\%
0 \ar[rrd] & 0 \ar[rd] & && \\%
&& 1 && }$$

where $r = t^2 - 5t +5$. Since $\DTr ^3 M$ is indecomposable, we
have the following exact sequence:

$$ 0 \to \Omega ^1(\DTr ^3 M) \to P_0 ^{r} \to \DTr ^3 M \to 0$$

The dimension vector of $\Omega ^1(\DTr ^3 M)$ is
$$\xymatrix@-0.7pc{%
                  && 0 \ar[lld] \ar[ld] \ar[d] \ar[rrd] && \\%
r-t+3 \ar[d] & r-t+3 \ar[d] & r-t+4 \ar[dd] & \ldots & r-t+4 \ar[lldd] \\%
r \ar[rrd] & r \ar[rd] & && \\%
&& r(t-1) && }$$

To show that $\Omega ^1(\DTr M)$ is not a projective module, it is
enough to see that $2(r-t+3) + (t-2)(r-t+4) + 2 (t-3) > r(t-1)$.

We have that $$\begin{array}{rcl}
  2(r-t+3) + (t-2)(r-t+4) + 2 (t-3) &=& tr - (t^2 -6t+8) )\\ %
   &=& rt - r + t-3 ) \\ %
   &>& r(t-1)\\
\end{array}$$

Therefore, $\Omega ^1(\DTr ^3 M)$ is not a projective module and
$\pd \DTr ^3 M \geq 2$. Consequently, $P_0 \notin L_{eAe}$ and we
get a non-trivial $IP$-path $I \rightsquigarrow P_0$ for some
indecomposable injective $I$. Since $\infty$ is a sink, there always
exists a path from $I_\infty$ to any indecomposable injective of
$eAe$. Thus, we get a non-trivial $IP$-path $I_\infty
\rightsquigarrow P_0$.

On the other hand, by the lemma \ref{lem:IFam}, there exists an
infinite family of non-isomorphic indecomposable $eAe$-module $\{
N_\lambda \}_{\lambda \in k^*}$ and non-zero morphisms $P_0 \to
N_\lambda \to I_\infty$. Combining these morphisms with the
$IP$-path, we obtain:

$$I_\infty \rightsquigarrow P_0 \to N_\lambda \to I_\infty$$

Therefore, none of the $N_\lambda$ are in $\mathcal L \cup
R$, see \cite{ACLST}, and thus $eAe$ is not Laura. By \cite{AC}, $A$ is not Laura.
\end{dem}

In the remaining case, we show that $A$ is tilted.

\begin{lem}
Let $A=kQ/I$ be a toupie simply connected algebra such that
$m=\dim_k e_0 A e_\infty \geq 3$. If $t$, the number of branches of
$Q$, is equal to $m+1$ and there is at most one branch of length at
least three then $A$ is tilted.
\end{lem}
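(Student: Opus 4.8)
The plan is to show that under these hypotheses $A$ arises as a one-point extension $A = B[M]$ of a hereditary algebra $B$ by a module $M$ that, together with $B$'s preinjective component, produces a complete slice in the Auslander–Reiten quiver of $A$; then $A$ is tilted by \cite{HR}. First I would set $B = (1-e_0)A(1-e_0)$ and observe that, since $A$ is simply connected with $t = m+1$ and at most one branch of length $\geq 3$, the algebra $B$ is hereditary (the only relations of $A$ emanate from the source $0$, so deleting $0$ removes all of them). I would write $M = \rad P_0$, so that $A = B[M]$ is the one-point extension of $B$ by $M$.

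The key step is to analyze the $B$-module $M = \rad P_0$ and its position in $\md B$. Because $A$ is simply connected, for each pair of distinct branches there is a minimal relation involving both, and with $t = m+1$ the relation space has dimension $t - m = 1$; thus there is (up to scalar) a single relation $\sum_{i=1}^{t}\lambda_i w_i \in I$ with all $\lambda_i \neq 0$. This forces $M = \rad P_0$ to be an indecomposable $B$-module, and the one branch of length $\geq 3$ (if present) governs how $M$ extends along that branch. I would then exhibit the indecomposable summands of the injective envelope structure and show that $M$ is (or embeds compatibly into) an injective $B$-module lying on a single slice of the preinjective component of $\Gamma_B$, exactly as in the analogous argument in the theorem of Subsection 3.1. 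Since $B$ is hereditary, its preinjective component is directed and well understood, so locating the relevant slice is a matter of reading off dimension vectors from the branch lengths.

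Having placed $M$ appropriately, I would build the complete slice of $\Gamma_A$: the slice of $\Gamma_B$ through the relevant preinjective modules, together with the new projective vertex $P_0$ coming from the extension, forms a complete slice in $\Gamma_A$. The condition $t = m+1$ is what makes the extended quiver a tree (avoiding the infinite families $N_\lambda$ of the previous lemma), and the condition ``at most one branch of length $\geq 3$'' is what keeps the projective dimensions bounded so that no obstruction of the kind $\pd \DTr^k M \geq 2$ arises. I would then invoke \cite{HR}: an algebra possessing a complete slice in its Auslander–Reiten quiver is tilted.

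The hard part will be verifying that $M = \rad P_0$ really does sit on a single slice of the preinjective component of $\md B$, i.e.\ that its indecomposable summands (or its indecomposable self) are pairwise Hom-orthogonal up to the slice condition and carry the correct injective structure over $B$. This requires a careful computation of the dimension vector of $M$ branch by branch, using the single defining relation to control the multiplicities at $\infty$, and checking that the length-$\geq 3$ branch does not push any summand off the slice. Once this structural fact is established, the conclusion that $A = B[M]$ has a complete slice, and hence is tilted, follows formally from \cite{HR}.
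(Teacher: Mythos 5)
Your setup is right ($B=(1-e_0)A(1-e_0)$ is hereditary, $M=\rad P_0$ is indecomposable because the unique relation $\sum_{i=1}^t\lambda_iw_i$ has all coefficients nonzero, and $A=B[M]$), but your central structural claim is wrong: $M$ is \emph{not} injective over $B$, nor does it lie in the preinjective component of $\Gamma_B$. Concretely, $\dim_k M_\infty = m = t-1 \geq 3$, whereas every indecomposable injective $B$-module has dimension at most $1$ at $\infty$ (the quiver of $B$ is a tree in which each vertex has a unique path into $\infty$, so $I_\infty$ has dimension vector with all entries equal to $1$ and the other $I_x$ are even smaller at $\infty$). Worse, $M$ sits at the \emph{opposite} end of $\md B$: the minimal projective resolution is $0 \to P_\infty \to \bigoplus_{i=1}^t P_i \to M \to 0$ (exactly one copy of $P_\infty$ precisely because $t=m+1$), and applying $\DHom(-,B)$ with $\DHom(M,B)=0$ yields, by a dimension-vector count and indecomposability, $\tau_B M \cong P_{t+1}$, the projective at the second vertex of the unique long branch. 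So $M \cong \tau_B^{-1}P_{t+1}$ is \emph{postprojective}. Since the quiver of $B$ is a star with $t\geq 4$ arms, $B$ is representation-infinite, its preprojective and preinjective components are disjoint, and your plan of placing $M$ on a slice of the preinjective component ``as in Subsection 3.1'' cannot be carried out --- that argument belongs to the $m=0$ case, where the geometry is genuinely dual to what happens here.

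The paper's proof is exactly the computation above: it identifies $\tau_B M$ with $P_{t+1}$ and then cites \cite{HR} for the fact that a one-point extension of a hereditary algebra by a postprojective module is tilted. Note also where the hypothesis ``at most one branch of length at least three'' actually enters: it is not about bounding projective dimensions along iterated translates, but about making the dimension vector of $\tau_B M$ (which is supported on the vertices strictly below the arrows out of $0$, together with $\infty$) coincide with that of a single indecomposable projective. With two long branches, $\tau_B M$ is supported on two chains and is not projective, and the preceding lemma of the paper shows $A$ then fails even to be Laura. So to repair your proof you should replace the preinjective-slice analysis by the direct computation of $\tau_B M$ from the projective resolution, verify $\DHom(M,B)=0$ (immediate since $B$ is hereditary and $M$ is a radical of a projective), and conclude via the postprojective one-point-extension criterion of \cite{HR}.
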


\begin{dem}
The quiver of $A$ has the following shape:

$$\xymatrix@-0.7pc{%
&& \bullet\save[]+<0pt,8pt>*{0}\restore \ar[lld] \ar[ld] \ar[rd] & \\%
\save[]+<-5pt,0pt>*{1}\restore \bullet \ar[rrdddd] & \save[]+<-5pt,0pt>*{2}\restore \bullet \ar[rdddd] &  \ldots & \save[]+<6pt,0pt>*{t}\restore \bullet \ar[d] \\%
& & & \save[]+<15pt,0pt>*{t+1}\restore \bullet \ar[d] \\%
& & & \vdots \\%
& & & \save[]+<8pt,0pt>*{s}\restore \bullet \ar[ld] \\%
&& \bullet \save[]+<0pt,-8pt>*{\infty}\restore &
}$$

Consider $A=B[M]$ where $B=(1-e_0)A(1-e_0)$ and $M=\rad P_0$. Since
A is simply connected, $B$ is hereditary and $M$ is indecomposable.
We will show that $M$ is a post-projective $B$-module. To do this,
we will show that $\DTr M$ is a projective $B$-module. In order to
compute $\DTr M$, we consider the projective resolution of $M$.
Remember that $t = m+1$.

$$ 0 \to P_\infty \to \bigoplus\limits_{i=1}^t P_i \to M \to 0 $$

Applying $\DHom ( _ , B)$ to this sequence, we get the following
exact sequence:

$$ 0 \to \DTr M \to I_\infty \to \bigoplus\limits_{i=1}^t I_i \to
\DHom (M, B) \to 0$$

Since $M$ is the radical of the projective module of the source $0$
and $B$ is a hereditary algebra, we have that $\DHom (M, B) = 0$.

Since $\DTr M$ is indecomposable, we obtain that $\DTr M \cong
P_{t+1}$. Therefore, $M$ is a post-projective module over the
hereditary algebra $B$, and by \cite{HR}, A is tilted.
\end{dem}

\subsection{Conclusion}%

Given an algebra $A$, we know that if $A$ is tilted, quasitilted,
weakly shod or laura, than any full subcategory of $A$ will also
belong to the given class \cite{AC}.  Let now $A=kQ/I$ be a
triangular algebra.  Then $Q$ is a tree quiver with toupie quivers
glued to it. We can therefore use our main result to show that a
given algebra $A=kQ/I$ is not tilted, quasitilted, weakly shod or
laura. Also, it should be noted that toupie algebras are in general
wild. Consequently ,we now have access to a new class of wild
algebras for which we have, using our characterization as well as
all other known results on tilted, quasitilted, weakly shod and
laura algebras, a great deal of information.

\end{document}